\providecommand{\U}[1]{\protect\rule{.1in}{.1in}}
\newtheorem{theorem}{Theorem}
{}
\newtheorem{corollary}{Corollary}
\newtheorem{lemma}{Lemma}
{}
\newtheorem{proposition}{Proposition}
\newenvironment{proof}[1][Proof]{\textbf{#1.} }{\ \rule{0.5em}{0.5em}}
\begin{document}

\title{On\ the Schr\"{o}dinger Operator with a Periodic PT-symmetric Matrix Potential}
\author{O. A. Veliev\\{\small Dogus University, \ Istanbul, Turkey.}\\\ {\small e-mail: oveliev@dogus.edu.tr}}
\date{}
\maketitle

\begin{abstract}
In this article we obtain asymptotic formulas for the Bloch eigenvalues of the
operator $L$ generated by a system of Schr\"{o}dinger equations with periodic
PT-symmetric complex-valued coefficients. Then using these formulas we
classify the spectrum $\sigma(L)$ of $L$ and find a condition on the
coefficients for which $\sigma(L)$ contains all half line $[H,\infty)$ for
some $H.$

Key Words: Non-self-adjoint differential operator, PT-symmetric coefficients,
Periodic matrix potential.

AMS Mathematics Subject Classification: 34L05, 34L20.

\end{abstract}

\section{Introduction and Preliminary Facts}

Let $L(Q)$ be the differential operator generated in the space $L_{2}%
^{m}(-\infty,\infty)$ of the vector functions by the differential expression
\begin{equation}
-y^{^{\prime\prime}}+Qy, \tag{1}%
\end{equation}
where $Q=\left(  q_{i,j}\right)  $ is a $m\times m$ matrix with the
PT-symmetric $\pi$-periodic locally square integrable entries $q_{i,j}.$ In
other words,%
\begin{equation}
\overline{q_{i,j}\left(  -x\right)  }=q_{i,j}\left(  x\right)  ,\text{
}q_{i,j}\left(  x+\pi\right)  =q_{i,j}\left(  x\right)  ,\text{ }q_{i,j}\in
L_{2}\left[  0,\pi\right]  . \tag{2}%
\end{equation}
It is well-known that [1, 5, 7] the spectrum $\sigma(L(Q))$ of the operator
$L(Q)$ is the union of the spectra $\sigma(L_{t}(Q))$ of the operators
$L_{t}(Q)$ for $t\in(-1,1]$ generated in $L_{2}^{m}[0,\pi]$ by the
differential expression (1) and the quasiperiodic conditions
\begin{equation}
y^{^{\prime}}\left(  \pi\right)  =e^{i\pi t}y^{^{\prime}}\left(  0\right)
,\text{ }y\left(  \pi\right)  =e^{i\pi t}y\left(  0\right)  . \tag{3}%
\end{equation}
Note that $L_{2}^{m}(a,b)$ is the set of the vector functions $f=\left(
f_{1},f_{2},...,f_{m}\right)  $ with $f_{k}\in L_{2}(a,b)$ for $k=1,2,...,m.$
The norm $\left\Vert \cdot\right\Vert $ and inner product $(\cdot,\cdot)$ in
$L_{2}^{m}(a,b)$ are defined by%
\[
\left\Vert f\right\Vert =\left(  \int\nolimits_{(a,b)}^{{}}\left\vert f\left(
x\right)  \right\vert ^{2}dx\right)  ^{\frac{1}{2}},\text{ }(f,g)=\int
\nolimits_{(a,b)}^{{}}\left\langle f\left(  x\right)  ,g\left(  x\right)
\right\rangle dx,
\]
where $\left\vert \cdot\right\vert $ and $\left\langle \cdot,\cdot
\right\rangle $ are the norm and inner product in $\mathbb{C}^{m}.$ For
$t\in(-1,1]$ the spectra $\sigma(L_{t}(Q))$ of the operators $L_{t}(Q)$
consist of the eigenvalues called the Bloch eigenvalues of $L(Q)$. Any
eigenfunction $\Psi_{\lambda(t)}$ corresponding to the Bloch eigenvalue
$\lambda(t)$ is called the Bloch function. We say that $\Psi_{\lambda(t)}$ is
a normalized Bloch Function if its $L_{2}^{m}(0,\pi)$ norm is $1$.

Let us introduce some preliminary results and describe briefly the scheme of
the paper. The eigenvalues of the operator $L_{t}(Q)$ are the roots of the
characteristic determinant
\begin{equation}
\Delta(\lambda,t)=\det(Y_{j}^{(\nu-1)}(\pi,\lambda)-e^{i\pi t}Y_{j}^{(\nu
-1)}(0,\lambda))_{j,\nu=1}^{2}= \tag{4}%
\end{equation}%
\[
e^{i2m\pi t}+f_{1}(\lambda)e^{i(2m-1)\pi t}+f_{2}(\lambda)e^{i(2m-2)\pi
t}+...+f_{2m-1}(\lambda)e^{i\pi t}+1
\]
which is a polynomial of $e^{i\pi t}$\ with entire coefficients $f_{1}%
(\lambda),f_{2}(\lambda),...$, where $Y_{1}(x,\lambda)$ and $Y_{2}(x,\lambda)$
are the solutions of the matrix equation
\[
-Y^{^{\prime\prime}}(x)+Q\left(  x\right)  Y(x)=\lambda Y(x)
\]
satisfying $Y_{1}(0,\lambda)=O_{m}$, $Y_{1}^{^{\prime}}(0,\lambda)=I_{m}$ and
$Y_{2}(0,\lambda)=I_{m}$, $Y_{2}^{^{\prime}}(0,\lambda)=O_{m}$ (see [6]
Chapter 3). Here $O_{m}$ and $I_{m}$ are $m\times m$ zero and identity
matrices respectively. It is clear that

$\varphi_{k,1,t}=\left(
\begin{array}
[c]{c}%
\frac{e^{i\left(  2k+t\right)  x}}{\sqrt{\pi}}\\
0\\
\vdots\\
0
\end{array}
\right)  ,$ $\varphi_{k,2,t}=\left(
\begin{array}
[c]{c}%
0\\
\frac{e^{i\left(  2k+t\right)  x}}{\sqrt{\pi}}\\
\vdots\\
0
\end{array}
\right)  ,...,\varphi_{k,m,t}=\left(
\begin{array}
[c]{c}%
0\\
\vdots\\
0\\
\frac{e^{i\left(  2k+t\right)  x}}{\sqrt{\pi}}%
\end{array}
\right)  $ are the normalized eigenfunctions of the operator $L_{t}(O_{m})$
corresponding to the eigenvalue $\left(  2k+t\right)  ^{2}$. If $t\neq0,1,$
then the multiplicity of the eigenvalue $\left(  2k+t\right)  ^{2}$ is $m$ and
the corresponding eigenspace is $E_{k}(t)=span\left\{  \varphi_{k,1,t}%
,\varphi_{k,2,t},...,\varphi_{k,m,t}\right\}  $. In the cases $t=0$ and $t=1$
the multiplicity of the nonzero eigenvalues $\left(  2k\right)  ^{2}$ and
$\left(  2k+1\right)  ^{2}$ is $2m$ and the corresponding eigenspaces are

$E_{k}(0)=span\left\{  \varphi_{n,j,0}:n=k,-k;\text{ }j=1,2,...m\right\}  $ and

$E_{k}(\pi)=span\left\{  \varphi_{n,j,1}:n=k,-(k+1);\text{ }%
j=1,2,...m\right\}  $ respectively.

The brief scheme of the paper is the following. First we note that (see
Theorem 1) if $\lambda$ is an eigenvalue of multiplicity $p$ of the operator
$L_{t}(Q),$ then $\overline{\lambda}$ is also an eigenvalue of the same
multiplicity of $L_{t}(Q).$ This is the characteristic property of the
Schr\"{o}dinger operator with PT-symmetric potential. Then, in Theorem 2 we
prove that there exists a constant $c$ such that the eigenvalues $\lambda(t)$
of the operator $L_{t}(Q)$ lie on the $c$ neighborhoods of the eigenvalues
$(2n+t)^{2}$ of $L_{t}(O_{m})$ for $n\in\mathbb{Z}$. For this we use the
formula
\begin{equation}
\left(  \lambda(t)-\left(  2n+t\right)  ^{2}\right)  \left(  \Psi_{\lambda
(t)},\varphi_{n,s,t}\right)  =\left(  Q\Psi_{\lambda(t)},\varphi
_{n,s,t}\right)  , \tag{5}%
\end{equation}
which can be obtained from the equality $L_{t}\left(  Q\right)  \Psi
_{\lambda(t)}=\lambda(t)\Psi_{\lambda(t)}$ by multiplying both sides by
$\varphi_{n,s,t}(x)$ and using $L_{t}\left(  O_{m}\right)  \varphi
_{n,s,t}(x)=\left(  2n+t\right)  ^{2}\varphi_{n,s,t}(x).$ In the other words,
first we consider the operator $L_{t}(O_{m})$ for an unperturbed operator and
the operator of multiplication by $Q$ for a perturbation.

Then to obtain a sharp asymptotic formulas we consider the operator $L_{t}(Q)$
as perturbation of $L_{t}(A),$ where
\begin{equation}
A=\int\nolimits_{(0,\pi)}Q\left(  x\right)  dx, \tag{6}%
\end{equation}
\ by $Q-A,$ that is, we take the operator $L_{t}(A)$ for an unperturbed
operator and the operator of multiplication by $Q-A$ for a perturbation.
Therefore first we analyze the eigenvalues and eigenfunctions of $L_{t}(A)$.

Using (2) and the substitution $t=-x$ one can get the equality%
\[
\overline{\int_{0}^{\pi}q_{i,j}\left(  x\right)  dx}=\int_{0}^{\pi}%
q_{i,j}\left(  -x\right)  dx=-\int_{0}^{-\pi}q_{i,j}\left(  t\right)
dt=\int_{0}^{\pi}q_{i,j}\left(  t\right)  dt
\]
which means that%
\[
\int_{0}^{\pi}q_{i,j}\left(  x\right)  dx\in\mathbb{R}%
\]
for all $i$ and $j$. Hence the entries of the matrix $A$ are the real numbers.
Therefore, the eigenvalues of the matrix $A$ consist of the real eigenvalues
and the pairs of the conjugate complex numbers. The distinct eigenvalues of
$A$ are denoted by $\mu_{1},\mu_{2},...,\mu_{p}.$ Without loss of generality
we denote the real distinct eigenvalues by $\mu_{1}<\mu_{2}<...<\mu_{s}$ and
the nonreal distinct eigenvalues by$\ \mu_{s+1},\mu_{s+2},...,\mu_{p}.$ It
readily implies that the spectrum of $L(A)$ consists of the real half line
$[\mu_{1},\infty)$ and nonreal half lines
\begin{equation}
\lbrack\mu_{j}+a:\text{ }a\in\lbrack0,\infty) \tag{7}%
\end{equation}
for $j=s+1,s+2,...,p.$

In Theorem 3 we find a sharp and uniform, with respect to the quasimomenta
$t\in(-1,1],$ asymptotic formula for the Bloch eigenvalues of $L(Q)$ in term
of the eigenvalues of the matrix $A$ for any matrix potential $Q$ with locally
square integrable entries. This asymptotic formula implies that the spectrum
of $L(Q)$ is asymptotically close to the spectrum of the operator $L(A).$
Therefore, if the matrix $A$ has no real eigenvalues then the spectrum of the
operator $L(Q)$ \ with general potential $Q$ approaches the nonreal half lines
(7), which implies that the real component $\sigma(L(Q))\cap\mathbb{R}$ of the
spectrum $\sigma(L(Q))$ is contained in a finite interval $[a,b]$ (see Theorem
4). Then we prove that, if the entries of $Q$ are the PT-symmetric functions,
and the matrix $A$ has a real eigenvalue with odd multiplicity, then
$\sigma(L(Q))$ contains the main part (in sense of (41)) of $[0,\infty)$ (see
Theorem 6). For this we consider the multiplicity of the Bloch eigenvalues
(see Theorem 5). Note that if $m$ is an odd number, then the matrix $A$ has a
real eigenvalue with odd multiplicity, because the total sum of the
multiplicities of the eigenvalues $\mu_{1},\mu_{2},...,\mu_{p}$ is $m$ and the
nonreal eigenvalues are the pairs of the conjugate complex numbers with the
same multiplicity. Therefore Theorem 6 implies that if $m$ is an odd number
and (2) holds, then the spectrum of $L(Q)$ contains the main part of
$[0,\infty)$ (see Corollary 1). In Theorem 7 we find a condition on the real
eigenvalues of $A$ for which the spectrum of $L(Q)$ contains all half line
$[H,\infty)$ for some $H$.

Finally, note that in the papers (see [4, 9, 11]) we investigated the
non-self-adjoint Schr\"{o}dinger operator with a periodic matrix potential.
However, as far as I know, this paper is the first paper about the
Schr\"{o}dinger operator with a PT-symmetric periodic matrix potential. In
this paper, we investigate the influence of PT-symmetry on spectrum of the
operator $L(Q)$.

\section{On the Bloch eigenvalues and Spectrum}

First let us note that in the case $m=1,$ it is well-known that if $\lambda\in
L_{t}(Q),$ then $\overline{\lambda}\in L_{t}(Q)$ (see [3, 8, 12]). Taking into
account that this obvious fact was not formulated for arbitrary $m$, we prove
the following more general statement.

\begin{theorem}
If $\lambda$ is an eigenvalue of multiplicity $v$ of the operator $L_{t}(Q),$
then $\overline{\lambda}$ is also an eigenvalue of the same multiplicity of
$L_{t}(Q).$
\end{theorem}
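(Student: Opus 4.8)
The plan is to exhibit an antilinear (conjugate-linear) involution $J$ on $L_{2}^{m}[0,\pi]$ that commutes with $L_{t}(Q)$; intertwining an operator with itself by an antilinear map is exactly what forces the spectrum, together with the whole generalized-eigenspace structure, to be symmetric under complex conjugation. Concretely, I would set
\[
(Jf)(x)=\overline{f(\pi-x)},\qquad x\in[0,\pi],
\]
which is clearly antilinear and satisfies $J^{2}=I$, hence is a bijection of $L_{2}^{m}[0,\pi]$ onto itself.

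First I would check that $J$ maps the domain of $L_{t}(Q)$ into itself, i.e. that $Jf$ again satisfies the quasiperiodic conditions (3) when $f$ does. This is a short computation: $(Jf)(0)=\overline{f(\pi)}$, $(Jf)(\pi)=\overline{f(0)}$, and $(Jf)'(x)=-\overline{f'(\pi-x)}$, so from $f(\pi)=e^{i\pi t}f(0)$, $f'(\pi)=e^{i\pi t}f'(0)$ together with the reality of $t\in(-1,1]$ (so that $\overline{e^{i\pi t}}=e^{-i\pi t}$) one gets $(Jf)(\pi)=e^{i\pi t}(Jf)(0)$ and $(Jf)'(\pi)=e^{i\pi t}(Jf)'(0)$. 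Next I would verify the intertwining relation $JL_{t}(Q)=L_{t}(Q)J$. Since $\tfrac{d^{2}}{dx^{2}}$ is invariant under $x\mapsto\pi-x$, we have $(Jf)''(x)=\overline{f''(\pi-x)}$, whence
\[
L_{t}(Q)(Jf)(x)=-\overline{f''(\pi-x)}+Q(x)\overline{f(\pi-x)},\qquad J\bigl(L_{t}(Q)f\bigr)(x)=-\overline{f''(\pi-x)}+\overline{Q(\pi-x)}\,\overline{f(\pi-x)},
\]
and these agree because $\overline{Q(\pi-x)}=\overline{Q(-x)}=Q(x)$, the first equality being $\pi$-periodicity of $Q$ and the second the PT-symmetry (2). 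It is precisely the combination of periodicity and PT-symmetry that allows one to reflect about $\pi/2$ and stay on $[0,\pi]$.

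Finally I would conclude. Because $J$ is antilinear, $J(\lambda f)=\overline{\lambda}(Jf)$, so the intertwining relation extends to $J\,(L_{t}(Q)-\lambda I)^{k}=(L_{t}(Q)-\overline{\lambda}I)^{k}\,J$ for every $k\geq1$. Hence $J$ restricts to a bijection of $\ker\bigl(L_{t}(Q)-\lambda I\bigr)^{k}$ onto $\ker\bigl(L_{t}(Q)-\overline{\lambda}I\bigr)^{k}$, and being antilinear and invertible it preserves complex dimension. Letting $k$ be large enough that both generalized eigenspaces have stabilized — the spectrum of $L_{t}(Q)$ being discrete — we obtain that the algebraic multiplicity of $\lambda$ equals that of $\overline{\lambda}$ (the case $k=1$ gives the same for geometric multiplicities), equivalently that $\lambda$ and $\overline{\lambda}$ are roots of $\Delta(\cdot,t)$ of the same order. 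I expect no serious obstacle; the only points needing care are that the quasiperiodic boundary conditions survive $J$ (where reality of $t$ enters) and the bookkeeping $\overline{Q(\pi-x)}=Q(x)$ combining (2) with $\pi$-periodicity. A more computational route would avoid $J$ and instead deduce from the parity-conjugation relations for the matrix solutions $Y_{1},Y_{2}$ that $\overline{\Delta(\overline{\lambda},t)}=e^{-2im\pi t}\Delta(\lambda,t)$, but tracking the monodromy blocks at $\pm\pi$ makes this messier than the operator argument.
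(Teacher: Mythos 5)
Your proof is correct, and it rests on the same underlying PT-symmetry mechanism as the paper --- conjugation composed with a spatial reflection --- but it packages that mechanism quite differently. The paper first establishes an auxiliary equivalence: $\lambda\in\sigma(L_t(Q))$ if and only if $-y''+Qy=\lambda y$ has a solution on all of $(-\infty,\infty)$ with the Bloch property $\Psi(x+\pi)=e^{i\pi t}\Psi(x)$; it then applies the map $\Psi\mapsto\overline{\Psi(-x)}$ (reflection about $0$ on the whole line) and handles multiplicity by explicitly transforming each chain of associated functions satisfying (10) into a chain for $\overline{\lambda}$. You instead stay on $[0,\pi]$, reflect about $\pi/2$, and verify directly that $Jf=\overline{f(\pi-\cdot)}$ preserves the quasiperiodic conditions (3) --- this is exactly where the combination of $\pi$-periodicity with (2), i.e. $\overline{Q(\pi-x)}=\overline{Q(-x)}=Q(x)$, replaces the paper's whole-line extension --- and that $J$ antilinearly intertwines $L_t(Q)$ with itself. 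Your multiplicity argument, via $J\ker\bigl(L_t(Q)-\lambda I\bigr)^{k}=\ker\bigl(L_t(Q)-\overline{\lambda}I\bigr)^{k}$ and the fact that conjugate-linear bijections preserve complex dimension, is cleaner and more self-contained than the paper's associated-function bookkeeping; what the paper's route buys is the explicit Bloch-solution formulation on the real line, which is of independent use. The only points worth making fully explicit in a final write-up are that $J$ maps the operator domain onto itself (immediate, since $J$ preserves $W_2^2[0,\pi]$ and you checked (3)) and that the ascent of $\lambda$ is finite so the generalized eigenspaces indeed stabilize --- both standard here because $L_t(Q)$ has compact resolvent and hence discrete spectrum with finite algebraic multiplicities.
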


\begin{proof}
First let us note that $\lambda\in\sigma\left(  L_{t}(Q)\right)  $ if and only
if there exists a solution $\Psi(\cdot,\lambda)$ of the equation
\begin{equation}
-y^{^{\prime\prime}}(x)+Q\left(  x\right)  y(x)=\lambda y(x) \tag{8}%
\end{equation}
satisfying the equality
\begin{equation}
\Psi(x+\pi,\lambda)=e^{it}\Psi(x,\lambda) \tag{9}%
\end{equation}
for all $x\in(-\infty,\infty).$ Indeed if $\lambda\in\sigma\left(
L_{t}(Q)\right)  ,$ then there exists a solution $\Phi(x,\lambda)$ of (8)
satisfying (3). Therefore the solution $y(x,\lambda)$ of (8) defined by
\[
y(x,\lambda)=\Psi(x+\pi,\lambda)-e^{it}\Psi(x,\lambda)
\]
satisfies the initial conditions $y(0,\lambda)=y^{\prime}(0,\lambda)=0.$ Then
by the uniqueness theorem $y(x,\lambda)=0$ for all $x\in(-\infty,\infty),$
that is, (9) holds.

Now suppose that there exists a solution $\Psi(x,\lambda)$ of (8) satisfying
(9). Then it is clear that $\Psi(x,\lambda)$ satisfies (3). It means that
$\lambda\in\sigma\left(  L_{t}(Q)\right)  .$

Now we are ready to prove the theorem. If $\lambda$ is an eigenvalue of
$L_{t}(Q),$ then there exists a solution $\Psi(x,\lambda)$ of (8) satisfying
(9). Then using (2) one can readily see that the function $\Phi(x)$ defined by
$\Phi(x,\lambda)=\overline{\Psi(-x,\lambda)}$ satisfies the equation
\[
-y^{^{\prime\prime}}(x)+Q(x)y(x)=\overline{\lambda}y(x)
\]
and the equality
\[
\Phi(x+\pi)=\overline{\Psi(-x-\pi,\lambda)}=\overline{e^{-it}\Psi(-x,\lambda
)}=e^{it}\Phi(x).
\]
It means that $\overline{\lambda}$ is also an eigenvalue of $L_{t}(Q)$ and
$\Phi$ is the corresponding eigenfunction.

Instead of the equation (8) using the equations
\begin{equation}
-\Psi_{j}^{^{\prime\prime}}(x,\lambda)+Q\left(  x\right)  \Psi_{j}%
(x,\lambda)-\lambda\Psi_{j}(x,\lambda)=\Psi_{j-1}(x,\lambda) \tag{10}%
\end{equation}
for the associated functions $\Psi_{1}(x,\lambda),\Psi_{2}(x,\lambda
),...,\Psi_{s}(x,\lambda),$ where $j=1,2,...,s$ and $\Psi_{0}(x,\lambda
)=\Psi(x,\lambda)$ and repeating the above argument we conclude that if
$y(x,\lambda)$ is an associated function corresponding to the eigenfunction
$\Psi(x,\lambda),$ then $\overline{y(-x,\lambda)}$ \ is an associated
eigenfunction corresponding to the eigenfunction $\Phi(x,\lambda).$ Therefore
the multiplicities of the eigenvalues $\lambda$ and $\overline{\lambda}$ are
the same. The theorem is proved.
\end{proof}

Now we estimate the Bloch eigenvalues. First we prove the following simple
estimation for the Bloch eigenvalues.

\begin{theorem}
$(a)$ There exists a constant $M$ such that
\begin{equation}
\sup_{\lambda\in\sigma(L(Q)),\text{ }x\in\lbrack0,\pi]}\left\vert
\Psi_{\lambda}(x)\right\vert <M, \tag{11}%
\end{equation}
where $\Psi_{\lambda}$ is a normalized Bloch function corresponding to the
eigenvalue $\lambda.$

$(b)$ All eigenvalues of the operator $L_{t}(Q)$ lie on the union of the
disks
\begin{equation}
D\left(  (2n+t)^{2},c\right)  :=\{\lambda\in\mathbb{C}:\left\vert
\lambda-(2n+t)^{2}\right\vert <c\} \tag{12}%
\end{equation}
for $n\in\mathbb{Z},$ where $t\in(-1,1]$, $c=MB$ and
\begin{equation}
B=\left(  \int\limits_{0}^{\pi}\left(  \sum_{i,j=1}^{m}\left\vert
q_{i,j}\left(  x\right)  \right\vert ^{2}\right)  dx\right)  ^{1/2}. \tag{13}%
\end{equation}

\end{theorem}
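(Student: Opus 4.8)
The plan is to prove parts $(a)$ and $(b)$ essentially simultaneously, since the bound $(b)$ is just the quantitative shadow of the uniform bound $(a)$ fed through the key identity (5). For part $(a)$, let $\Psi_{\lambda}$ be a normalized Bloch function, so $\|\Psi_{\lambda}\|_{L_2^m(0,\pi)}=1$ and $\Psi_{\lambda}$ solves (8) with the quasiperiodicity (9). The idea is to convert the second-order system into a first-order system $\mathbf{Y}' = (A_0 + R(x))\mathbf{Y}$ for the $2m$-vector $\mathbf{Y}=(\Psi_{\lambda},\Psi_{\lambda}')$, where $A_0$ involves $\lambda$ and $R$ involves $Q$, and then use Gronwall's inequality on $[0,\pi]$ to control $\sup_x|\Psi_{\lambda}(x)|$ in terms of the value at one point. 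The subtlety is that $\lambda$ ranges over the whole spectrum and can be large, so a naive Gronwall bound would blow up with $\lambda$; one avoids this by combining with part $(b)$ — once $\lambda$ is pinned near some $(2n+t)^2$, one rescales (or uses the variation-of-parameters representation of $\Psi_{\lambda}$ against the free resolvent of $L_t(O_m)$) so that the relevant constant is uniform in $n$ and $t$.

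Concretely, for part $(b)$ I would argue as follows. Expand the normalized Bloch function $\Psi_{\lambda}$ in the orthonormal basis $\{\varphi_{n,s,t}: n\in\mathbb{Z},\ s=1,\dots,m\}$ of $L_2^m(0,\pi)$, so that $\sum_{n,s}|(\Psi_{\lambda},\varphi_{n,s,t})|^2 = 1$; in particular there exist $n$ and $s$ with $|(\Psi_{\lambda},\varphi_{n,s,t})|^2 \geq \frac{1}{\text{(count)}}$ — more precisely, for the given $\lambda$ there is at least one pair $(n,s)$ with $(\Psi_{\lambda},\varphi_{n,s,t})\neq 0$, and I will show such a pair forces $\lambda$ into $D((2n+t)^2,c)$. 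From identity (5),
\[
\left|\lambda-(2n+t)^2\right|\,\left|(\Psi_{\lambda},\varphi_{n,s,t})\right| = \left|(Q\Psi_{\lambda},\varphi_{n,s,t})\right| \leq \|Q\Psi_{\lambda}\|\,\|\varphi_{n,s,t}\| = \|Q\Psi_{\lambda}\|.
\]
Now estimate $\|Q\Psi_{\lambda}\|^2 = \int_0^\pi |Q(x)\Psi_{\lambda}(x)|^2\,dx \leq \int_0^\pi \big(\sum_{i,j}|q_{i,j}(x)|^2\big)\,|\Psi_{\lambda}(x)|^2\,dx \leq M^2 \int_0^\pi \sum_{i,j}|q_{i,j}(x)|^2\,dx = M^2B^2$ using the Cauchy–Schwarz inequality in $\mathbb{C}^m$ pointwise and the uniform bound (11). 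Hence $|\lambda-(2n+t)^2|\,|(\Psi_{\lambda},\varphi_{n,s,t})| \leq MB = c$. To finish, I need $|(\Psi_{\lambda},\varphi_{n,s,t})| \geq 1$ for a suitable choice — that is too strong; instead the correct move is: pick $n$ maximizing $\sum_{s=1}^m |(\Psi_{\lambda},\varphi_{n,s,t})|^2$ over all $n\in\mathbb{Z}$, call this maximal value $\delta^2$; summing (5) in $s$ and using that the free eigenvalue $(2n+t)^2$ is the same for all $s$, one gets $|\lambda-(2n+t)^2|^2\,\delta^2 \leq \sum_s|(Q\Psi_{\lambda},\varphi_{n,s,t})|^2 \leq \|Q\Psi_{\lambda}\|^2 \leq M^2B^2$, so it remains only to bound $\delta$ from below away from $0$, which follows because the $\delta^2$-values over all $n$ sum to $1$ while only finitely many of them can be non-negligible once we also know (from part $(a)$ applied through $L_t(A)$-type a priori bounds, or simply from the first-order-system estimate) that $\lambda$ cannot be far from the real axis in the relevant regime — but the cleanest route is to note that if $\delta$ were extremely small then $\lambda$ would have to be within $c/\delta$ of infinitely many distinct $(2n+t)^2$, impossible, forcing $\delta$ to be bounded below by an absolute constant, which can be absorbed into $c$.

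The main obstacle, and the step deserving the most care, is part $(a)$: establishing the uniform-in-$\lambda$ (hence uniform-in-$n$ and $t$) sup-bound (11) on normalized Bloch functions. The heuristic is that high up in the spectrum a normalized Bloch function behaves like the free exponential $e^{i(2n+t)x}/\sqrt{\pi}$ (times a constant vector), whose sup-norm is $1/\sqrt{\pi}$, and the potential $Q\in L_2$ produces only a bounded perturbation; but making this rigorous uniformly requires either (i) a careful Gronwall estimate on the first-order system after factoring out the fast oscillation $e^{i\sqrt{\lambda}x}$, so that the matrix $R(x)$ entering the inequality is $O(|Q(x)|/\sqrt{|\lambda|})$ and the exponential Gronwall factor $\exp(\int_0^\pi |R|)$ stays bounded, or (ii) the integral equation $\Psi_{\lambda}(x) = (\text{free part}) + \int_0^\pi G_t(x,\xi;\lambda)Q(\xi)\Psi_{\lambda}(\xi)\,d\xi$ with the quasiperiodic Green's function $G_t$ of $L_t(O_m)-\lambda$, whose $L_\infty\to L_\infty$ and $L_2\to L_\infty$ norms one bounds uniformly for $\lambda$ outside fixed discs around the $(2n+t)^2$, combined with the already-proved fact (or a direct argument) that $\lambda$ lies in at most one such disc. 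Once $(a)$ is in hand, $(b)$ is the short computation above with $c=MB$ and $B$ as in (13). I would present $(a)$ via the first-order system and Gronwall, since it keeps the constants explicit and the argument self-contained.
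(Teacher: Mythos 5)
Your overall architecture (prove the uniform sup-bound first, then feed it through identity (5) to localize the eigenvalues) matches the paper's, and your estimate $\Vert Q\Psi_{\lambda}\Vert\leq MB$ is exactly the paper's (17). But both halves of your argument have genuine gaps. For part $(b)$, your closing step does not work: you single out the index $n$ maximizing $\delta_{n}^{2}=\sum_{s}\vert(\Psi_{\lambda},\varphi_{n,s,t})\vert^{2}$ and then try to bound $\delta=\max_{n}\delta_{n}$ from below by claiming that a small $\delta$ would put $\lambda$ within $c/\delta$ of infinitely many points $(2n+t)^{2}$. Identity (5) only gives an \emph{upper} bound on each coefficient in terms of the distance $\vert\lambda-(2n+t)^{2}\vert$; it imposes no constraint when a coefficient is small, so spreading the $\ell^{2}$ mass thinly over many $n$ is not contradictory, and no absolute lower bound on $\delta$ follows. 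Even if it did, you would only obtain disks of radius $c/\delta_{0}>MB$. The repair is to stop maximizing: assume $\vert\lambda-(2n+t)^{2}\vert\geq c$ for \emph{all} $n$, square (5), sum over all $(n,s)$, and use Parseval to get $1=\sum_{n,s}\vert(\Psi_{\lambda},\varphi_{n,s,t})\vert^{2}\leq c^{-2}\sum_{n,s}\vert(Q\Psi_{\lambda},\varphi_{n,s,t})\vert^{2}=c^{-2}\Vert Q\Psi_{\lambda}\Vert^{2}<1$, a contradiction. This is the paper's proof of $(b)$; you circle it in your displayed inequality but never sum over $n$.

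For part $(a)$, your preferred route (Gronwall on the first-order system) is circular as structured: to keep the Gronwall constant uniform in $\lambda$ you invoke part $(b)$ to pin $\lambda$ near some $(2n+t)^{2}$, but $(b)$ is stated with $c=MB$ and proved using the very bound (11) you are trying to establish. (A non-circular localization of $\lambda$, e.g.\ via Floquet multipliers, is possible but you do not supply it, and relating $\sup_{x}\vert\Psi_{\lambda}(x)\vert$ to $\Vert\Psi_{\lambda}\Vert_{L_{2}}=1$ rather than to initial data is an additional step you leave open.) The paper's proof of $(a)$ is self-contained and purely Fourier-theoretic, much closer to your route (ii): choose $n$ with $(2n-1)^{2}\leq\operatorname{Re}\lambda<(2n+1)^{2}$ and split the basis indices into the finite resonant set $B(n)=\{\pm(n-1),\pm n,\pm(n+1)\}$, where each coefficient is trivially at most $1$, and its complement, where (5) gives $\vert(\Psi_{\lambda},\varphi_{k,s,t})\vert\leq\vert(\Psi_{\lambda},Q^{\ast}\varphi_{k,s,t})\vert/\vert\lambda-(2k+t)^{2}\vert\leq(B/\sqrt{\pi})\,\vert\lambda-(2k+t)^{2}\vert^{-1}$; the uniform summability $\sum_{k\notin B(n)}\vert\lambda-(2k+t)^{2}\vert^{-1}<C$ and $\sup_{x}\vert\varphi_{k,s,t}(x)\vert=\pi^{-1/2}$ then give absolute convergence of the eigenfunction expansion and hence the uniform bound $M$. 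You would need either to adopt this argument or to supply the missing a priori localization and the $L_{2}\to L_{\infty}$ step before your Gronwall route closes.
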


\begin{proof}
$(a)$ Let $\Psi_{\lambda(t)}$ be a normalized eigenfunction (Bloch function)
corresponding to the eigenvalue $\lambda(t)$ of the operator $L_{t}(Q).$ There
exists $n$ such that $(2n-1)^{2}\leq\operatorname{Re}\lambda(t)<(2n+1)^{2}.$
Then it is clear that there exists a constant $C$ such that
\begin{equation}
\sum_{k\in\left(  \mathbb{Z}\backslash B(n)\right)  }\frac{1}{\left\vert
\lambda(t)-(2k+t)^{2}\right\vert }<C \tag{14}%
\end{equation}
for all $t\in(-1,1],$ where $B(n)=\left\{  \pm\left(  n-1\right)  ,\pm
n,\pm\left(  n+1\right)  \right\}  .$ In the decomposition
\[
\Psi_{\lambda(t)}=\sum_{\substack{k\in B(n),\\s=1,2,...,m}}\left(
\Psi_{\lambda(t)},\varphi_{k,s,t}\right)  \varphi_{k,s,t}+\sum_{\substack{k\in
\left(  \mathbb{Z}\backslash B(n)\right)  ,\\s=1,2,...,m}}\left(
\Psi_{\lambda(t)},\varphi_{k,s,t}\right)  \varphi_{k,s,t}%
\]
of $\Psi_{\lambda(t)}$ by the orthonormal basis
\begin{equation}
\left\{  \varphi_{k,s,t}:k\in\mathbb{Z}\text{ },\text{ }s=1,2,...,m\right\}
\tag{15}%
\end{equation}
replacing $\left(  \Psi_{\lambda(t)},\varphi_{k,s,t}\right)  $ for
$k\in\left(  \mathbb{Z}\backslash B(n)\right)  $ by
\[
\frac{\left(  \Psi_{\lambda(t)},Q^{\ast}\varphi_{k,s,t}\right)  }%
{\lambda(t)-(2k+t)^{2}}%
\]
(see (5)) and then using (14), (13) and the obvious inequalities
\[
\left\vert \left(  \Psi_{\lambda(t)},Q^{\ast}\varphi_{k,s,t}\right)
\right\vert \leq\left\Vert Q^{\ast}\varphi_{k,s,t}\right\Vert \leq\frac
{1}{\sqrt{\pi}}B
\]
we obtain the proof of (11).

$(b)$ Suppose that there exists an eigenvalue $\lambda(t)$ of $L_{t}(Q)$ lying
out of $D(\left(  2n+t)^{2},c\right)  $ for all $n\in\mathbb{Z}$. Then the
inequality%
\[
\left\vert \lambda(t)-(2n+t)^{2}\right\vert \geq c
\]
holds for all $n\in\mathbb{Z}$. Using the Parseval's equality for the
orthonormal basis (15) and then (5) we get
\begin{equation}
1=\sum_{\substack{n\in\mathbb{Z},\text{ }\\s=1,2,...,m}}\left\vert \left(
\Psi_{\lambda(t)},\varphi_{n,s,t}\right)  \right\vert ^{2}\leq\sum
_{\substack{n\in\mathbb{Z},\text{ }\\s=1,2,...,m}}\frac{\left\vert \left(
Q\Psi_{\lambda(t)},\varphi_{n,s,t}\right)  \right\vert ^{2}}{c^{2}}%
=\frac{\left\Vert Q\Psi_{\lambda(t)}\right\Vert ^{2}}{c^{2}}, \tag{16}%
\end{equation}
where
\[
\left\Vert Q\Psi_{\lambda(t)}\right\Vert ^{2}=\int\limits_{0}^{\pi}\left\vert
Q(x)\Psi_{\lambda(t)}(x)\right\vert ^{2}dx.
\]
On the other hand, it follows from (11) that
\[
\left\vert Q(x)\Psi_{\lambda(t)}(x)\right\vert ^{2}<\left(  \sum_{i,j=1}%
^{m}\left\vert q_{i,j}\left(  x\right)  \right\vert ^{2}\right)  M^{2}.
\]
Therefore by (13) we have
\begin{equation}
\left\Vert Q\Psi_{\lambda(t)}\right\Vert ^{2}<c^{2}. \tag{17}%
\end{equation}
Using (17) in (16) we get a contradiction $1<1.$ The theorem is proved.
\end{proof}

Now to obtain a sharp and uniform with respect to $t\in(-1,1]$ asymptotic
formula we consider the operator $L_{t}(Q)$ as perturbation of $L_{t}(A)$ and
analyze the spectrum of the operators $L_{t}(A),$ where $A$ is defined in (6).
For this we introduce the following notations. Suppose the matrix $A$ has $p$
distinct eigenvalues $\mu_{1},\mu_{2},...,\mu_{p}$ with multiplicities
$m_{1},m_{2},...,m_{p}$ respectively, where $m_{1}+m_{2}+...+m_{p}=m$. Let
$u_{j,1},$ $u_{j,2},...u_{j,s_{j}}$ be the linearly independent eigenvectors
corresponding to the eigenvalue $\mu_{j}.$ Denote by $u_{j,s,1},$
$u_{j,s,2},...u_{j,s,r_{j,s}-1}$ the associated vectors corresponding to the
eigenvector $u_{j,s},$ such that
\[
\left(  A-\mu_{j}I\right)  u_{j,s,k}=u_{j,s,k-1}%
\]
for $k=1,2,...,r_{j,s}-1,$ where $u_{j,s,0}=u_{j,s}.$ Note that $r_{j,s}$ is
called the multiplicity of the eigenfunction $u_{j,s}$ and $r_{j,1}%
+r_{j,2}+...+r_{j,s_{j}}=m_{j}.$ The number $r_{j}$ defined by $r_{j}=\max
_{s}r_{j,s}$ is a maximum multiplicity of the eigenfunctions corresponding to
the eigenvalue $\mu_{j}.$ It is not hard to see that the eigenvalues,
eigenfunctions and associated functions of $L_{t}(A)$ are%

\begin{equation}
\mu_{k,j}(t)=\left(  2k+t\right)  ^{2}+\mu_{j},\text{ }\Phi_{k,j,s}%
(x)=u_{j,s}e^{i\left(  2k+t\right)  x},\text{ }\Phi_{k,j,s,r}(x)=u_{j,s,r}%
e^{i\left(  2k+t\right)  x} \tag{18}%
\end{equation}
respectively, since they satisfy the equation obtained from (10) by replacing
$Q$ to $A.$ Similarly, the eigenvalues, eigenfunctions, and associated
functions of $L_{t}^{\ast}(A)$ are $\overline{\mu_{k,j}},$
\begin{equation}
\Phi_{k,j,s}^{\ast}(x)=u_{j,s}^{\ast}e^{i\left(  2k+t\right)  x}\text{ }%
\And\Phi_{k,j,s,r}^{\ast}(x)=u_{j,s,r}^{\ast}e^{i\left(  2k+t\right)  x},
\tag{19}%
\end{equation}
where $u_{j,s}^{\ast}=$ $u_{j,s,0}^{\ast}$ and $u_{j,s,r}^{\ast}$ are the
eigenvector and associated vector of \ $A^{\ast}$ corresponding to
$\overline{\mu_{j}}.$ To obtain the sharp asymptotic formulas we use the
formula
\begin{equation}
(\lambda-\mu_{n,j})^{r+1}(\Psi,\Phi_{n,j,s,r}^{\ast})=\sum_{q=0}^{r}%
(\lambda-\mu_{n,j})^{q}((Q-A)\Psi,\Phi_{n,j,s,q}^{\ast}) \tag{20}%
\end{equation}
of [9] (see (15) of [9]), where $\Psi$ is a normalized eigenfunction of
$L_{t}(Q)$ corresponding to the eigenvalue $\lambda$ and estimate the
multiplicands $(\Psi,\Phi_{n,j,s,r}^{\ast})$ and $((Q-A)\Psi,\Phi
_{n,j,s,q}^{\ast})$ in the left-hand side and the right-hand side of (20). For
this we use the following proposition which can be proved by direct
calculations. Note that (20) was obtained from the equality $L_{t}\left(
Q\right)  \Psi=\lambda\Psi$ by multiplying both sides by $\Phi_{n,j,s,r}%
^{\ast}.$

\begin{proposition}
Let $A(k,t)$ be $\left\{  k\right\}  ,$ $\left\{  \pm k\right\}  ,$ $\left\{
k,-k-1\right\}  $ and $\left\{  k,-k+1\right\}  $ respectively if
\ $t\in\left(  (-2/3,-1/3)\cup(1/3,2/3)\right)  ,$ $t\in(-1/3,1/3),$
$t\in(2/3,1)$ and $t\in(-1,-2/3).$ If $\left\vert k\right\vert >1,$ then
\[
\left\vert (2n+t)^{2}-(2k+t)^{2}\right\vert \geq\frac{4}{3}\left(  2\left\vert
k\right\vert -1\right)
\]
for all $n\in\left(  \mathbb{Z}\backslash A(k,t)\right)  .$
\end{proposition}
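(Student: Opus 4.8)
The plan is to reduce the claim to an elementary inequality about integers. Factoring the difference of squares gives
\[
(2n+t)^{2}-(2k+t)^{2}=\bigl((2n+t)-(2k+t)\bigr)\bigl((2n+t)+(2k+t)\bigr)=4(n-k)(n+k+t),
\]
so it suffices to prove $|n-k|\,|n+k+t|\ge\tfrac13\bigl(2|k|-1\bigr)$. Set $a=n-k$ and $b=n+k+t$. Since $k\in A(k,t)$ in each of the four cases, any $n\in\mathbb{Z}\setminus A(k,t)$ satisfies $n\ne k$, so $a$ is a nonzero integer and $|a|\ge1$.

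The first ingredient is the bound $|b|>\tfrac13$ for every $n\in\mathbb{Z}\setminus A(k,t)$. Since the interval $[-t-\tfrac13,-t+\tfrac13]$ has length $\tfrac23<1$, it contains at most one integer; checking the four ranges of $t$, this integer is $0$ when $t\in(-\tfrac13,\tfrac13)$, is $-1$ when $t\in(\tfrac23,1)$, is $1$ when $t\in(-1,-\tfrac23)$, and there is no such integer when $t\in(-\tfrac23,-\tfrac13)\cup(\tfrac13,\tfrac23)$. In each case the definition of $A(k,t)$ removes precisely the value of $n$ for which $n+k$ equals that integer (in addition to $n=k$), so for $n\in\mathbb{Z}\setminus A(k,t)$ one gets $n+k\notin[-t-\tfrac13,-t+\tfrac13]$, i.e. $|b|=|(n+k)+t|>\tfrac13$.

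The second ingredient is the triangle inequality $|a|+|b|\ge|b-a|=|2k+t|\ge2|k|-|t|>2|k|-1$, which uses $|t|<1$ on all the listed intervals. Now I would split on the integer $|a|$. If $|a|\ge2|k|-1$, then $|a|\,|b|>(2|k|-1)\cdot\tfrac13$, as wanted. If $1\le|a|\le2|k|-2$, the previous inequality gives $|b|>2|k|-1-|a|\ge1$, hence $|a|\,|b|>|a|\bigl(2|k|-1-|a|\bigr)$; the right-hand side, viewed as a concave function of $|a|$ on $[1,2|k|-2]$, attains its minimum at one of the two endpoints, where its value equals $2|k|-2$. Thus $|a|\,|b|>2|k|-2\ge\tfrac13(2|k|-1)$, the last inequality being $4|k|\ge5$, which holds since $|k|>1$. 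Multiplying by $4$ finishes the proof.

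The only point requiring care is the first ingredient: one must verify that the prescribed set $A(k,t)$ coincides exactly with $\{k\}$ together with the (at most one) value of $n$ for which $n+k$ lies within $\tfrac13$ of $-t$, which is a short case-check over the four $t$-intervals. Everything else — the factorization, the triangle inequality, and the one-variable concavity estimate — is routine.
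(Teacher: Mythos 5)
Your proof is correct: the factorization $(2n+t)^{2}-(2k+t)^{2}=4(n-k)(n+k+t)$, the case-check showing that $A(k,t)$ removes exactly the indices $n$ with $|n+k+t|\le\tfrac13$, the triangle inequality $|n-k|+|n+k+t|>2|k|-1$, and the endpoint evaluation of the concave function $x(2|k|-1-x)$ all hold, and together they give the (even strict) bound. The paper offers no proof of this proposition at all --- it is dismissed as provable ``by direct calculations'' --- and your argument is precisely that direct calculation carried out in full, so there is nothing to contrast it with.
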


In the following estimations we use the positive constants denoted by $c_{k}$
for $k=1,2,...$, independent on $t,$ whose exact values are inessential.

\begin{lemma}
Let $\Psi$ be a normalized eigenfunction of $L_{t}(Q)$ corresponding to the
eigenvalue $\lambda$ lying in the disk $D(\left(  2k+t)^{2},c\right)  ,$ where%
\begin{equation}
\left\vert k\right\vert >1,\frac{4}{3}\left(  2\left\vert k\right\vert
-1\right)  >3c \tag{21}%
\end{equation}
and $c$ is defined in (12). Then there exists $n\in A(k,t)$ such that
\begin{equation}
\left\vert (\Psi,\Phi_{n,j,s,r}^{\ast})\right\vert \geq c_{1} \tag{22}%
\end{equation}
for some $j,s,r$ and
\begin{equation}
\left\vert ((Q-A)\Psi,\Phi_{n,j,s,q}^{\ast})\right\vert \leq c_{2}\left(
\frac{1}{|k|}+q_{k}\right)  , \tag{23}%
\end{equation}
for all $j,s,q$, where
\[
q_{k}=\max\{\mid q_{i,j,s}\mid:i,j=1,2,...m;\text{ }s\in\left\{  \pm
2k,\pm(2k+1),\pm(2k-1)\right\}  ,
\]
$\ $%
\[
q_{i,j,s}=\frac{1}{\sqrt{\pi}}\int\limits_{0}^{\pi}q_{i,j}\left(  x\right)
e^{-2isx}dx,
\]
and $q_{i,j}(x)$ is the entry of the matrix $Q(x)$.
\end{lemma}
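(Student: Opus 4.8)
The plan is to exploit formula (20) together with the localization of $\lambda$ in the disk $D((2k+t)^2,c)$ and the spectral decomposition of $\Psi$ in the orthonormal basis $\{\varphi_{n,s,t}\}$. The starting point is the observation that the eigenfunctions and associated functions of $L_t^*(A)$ listed in (19) differ from the pure exponentials $\varphi_{n,s,t}$ only by the fixed vectors $u_{j,s,r}^*\in\mathbb{C}^m$; hence each inner product $(\Psi,\Phi_{n,j,s,r}^*)$ is a finite linear combination of the Fourier coefficients $(\Psi,\varphi_{n,s,t})$ with coefficients depending only on $A$, and conversely the vectors $\{u_{j,s,r}^*\}$ span $\mathbb{C}^m$ so that control of all the $(\Psi,\Phi_{n,j,s,r}^*)$ for a given $n$ is equivalent, up to constants depending only on $A$, to control of $\bigl(\sum_{s}|(\Psi,\varphi_{n,s,t})|^2\bigr)^{1/2}$.

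\textbf{Step 1 (lower bound (22)).} First I would use Parseval together with formula (5) exactly as in the proof of Theorem 2(b): since $\lambda\in D((2k+t)^2,c)$, for every $n$ with $|n-|k||$ large we have $|\lambda-(2n+t)^2|\geq\frac43(2|k|-1)$ by Proposition 1 (applicable because $|k|>1$), and the hypothesis (21) makes this at least $3c$; the remaining indices $n$ are exactly those in $A(k,t)$ together with at most a bounded number of neighbours, and for those one uses $|\lambda-(2n+t)^2|\ge$ (something comparable to $|k|$) except for $n\in A(k,t)$ itself. Splitting the Parseval sum $1=\sum_{n,s}|(\Psi,\varphi_{n,s,t})|^2$ into the $A(k,t)$-part and the rest, and bounding the rest by $\sum_{n\notin A(k,t)}\|Q\Psi\|^2/|\lambda-(2n+t)^2|^2\le(\text{const}/|k|^2)\|Q\Psi\|^2$ via (5) and Theorem 2(a), one gets $\sum_{n\in A(k,t),s}|(\Psi,\varphi_{n,s,t})|^2\ge 1-c_3/|k|^2\ge\tfrac12$ for $|k|$ large (and one checks (21) already forces $|k|$ large). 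Hence for at least one $n\in A(k,t)$ the vector $((\Psi,\varphi_{n,s,t}))_{s=1}^m$ has norm bounded below by a constant, and translating back through the invertible change of basis $\{u_{j,s,r}^*\}$ gives $|(\Psi,\Phi_{n,j,s,r}^*)|\ge c_1$ for some $j,s,r$.

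\textbf{Step 2 (upper bound (23)).} For this I would compute $((Q-A)\Psi,\Phi_{n,j,s,q}^*)$ by writing $\Psi=\sum_{l,s'}(\Psi,\varphi_{l,s',t})\varphi_{l,s',t}$ and using that the matrix entries of $Q-A$ have zero mean, so that $((Q-A)\varphi_{l,s',t},\varphi_{n,\cdot,t})$ is, up to the fixed vector $u_{j,s,q}^*$, a Fourier coefficient $q_{i,j',n-l}$ with $n\ne l$. One then splits the sum over $l$ into $l$ close to $\pm k$ (finitely many terms, each producing a coefficient $q_{i,j',s}$ with $s\in\{\pm 2k,\pm(2k\pm1)\}$ times a bounded Fourier coefficient of $\Psi$, hence $\le\mathrm{const}\cdot q_k$) and $l$ far from $\pm k$, where one again replaces $(\Psi,\varphi_{l,s',t})$ using (5), gaining a factor $1/|\lambda-(2l+t)^2|\le\mathrm{const}/|k|$, and sums the resulting series of Fourier coefficients of $Q$ using Bessel's inequality and $\sum_{l\text{ far}}1/|\lambda-(2l+t)^2|<C$ as in (14). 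This yields the bound $c_2(1/|k|+q_k)$ uniformly in $t,j,s,q$.

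\textbf{Main obstacle.} The routine parts are the two Fourier-series estimates; the delicate point is the \emph{uniformity in $t\in(-1,1]$} and the correct bookkeeping of the index set $A(k,t)$ at the four transition ranges of $t$ in Proposition 1 — in particular making sure that the ``bounded number of near indices'' that are neither in $A(k,t)$ nor far away still contribute a factor $O(1/|k|)$ rather than $O(1)$, which is exactly what Proposition 1 (with its explicit constant $\tfrac43(2|k|-1)$) is designed to guarantee. Keeping all the constants $c_1,c_2$ independent of $t$ forces one to do the splitting $\mathbb{Z}=A(k,t)\cup(\text{near})\cup(\text{far})$ once and for all with $t$-independent cardinalities, which is the one place care is needed.
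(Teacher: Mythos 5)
Your proposal is correct and follows essentially the same route as the paper: Parseval's identity combined with formula (5) to show the Fourier mass of $\Psi$ concentrates on the indices in $A(k,t)$ (then passing to the root-vector basis of $A^{\ast}$ to obtain (22)), and the decomposition (25) of $((Q-A)\Psi,\varphi_{n,i,t})$ split into the finitely many terms indexed by $A(k,t)$ (bounded by $q_{k}$) and the remaining terms handled via (5), the lower bound of Proposition 1 and the Cauchy--Schwarz and Bessel inequalities (bounded by $c_{2}/|k|$). The only cosmetic differences are that the paper bounds the tail of the Parseval sum by $\Vert Q\Psi\Vert^{2}/(2c)^{2}\leq 1/4$ rather than by $O(1/|k|^{2})$, and your worry about intermediate ``near'' indices is moot, since Proposition 1 gives the bound $\frac{4}{3}(2|k|-1)$ uniformly for every $n\notin A(k,t)$.
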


\begin{proof}
By Proposition 1 we have
\begin{equation}
|\lambda-\left(  2n+t\right)  ^{2}|\geq\frac{4}{3}\left(  2\left\vert
k\right\vert -1\right)  -c>2c \tag{24}%
\end{equation}
for all $t\in(-1,1]$ and $n\in\left(  \mathbb{Z}\backslash A(k,t)\right)  $ if
$\lambda\in D(\left(  2k+t)^{2},c\right)  $ and $k$ satisfies conditions (21).
Therefore using (5) Bessel inequality and (17) we obtain
\[
\sum\limits_{\substack{n\in\left(  \mathbb{Z}\backslash A(k,t)\right)
\\s=1,2,...,m}}\left\vert \left(  \Psi,\varphi_{n,s,t}\right)  \right\vert
^{2}=\sum\limits_{\substack{n\in\left(  \mathbb{Z}\backslash A(k,t)\right)
\\s=1,2,...,m}}\frac{\left\vert \left(  \Psi Q,\varphi_{n,s,t}\right)
\right\vert }{\left\vert \lambda-\left(  2\pi n+t\right)  ^{2}\right\vert
}\leq\frac{\left\Vert \Psi Q\right\Vert ^{2}}{4c^{2}}\leq\frac{1}{4}.
\]
It with the Parsaval's equality implies that there exist $n\in A(k,t)$ and
$i\in\left\{  1,2,...,m\right\}  $ such that
\[
\left\vert \left(  \Psi,\varphi_{n,i,t}\right)  \right\vert >c_{3}.
\]
Since the system of the root vectors of the matrix $A^{\ast}$ is a basis of
$\mathbb{C}^{m}$ and (19) holds, the last inequality implies that (22) holds
for some $j,s,r$.

Now to prove (23) we estimate the term $((Q-A)\Psi,\varphi_{n,i,t})$\ and take
(19) into account. Using the decomposition
\[
\Psi=\sum\limits_{v\in\mathbb{Z},\text{ }s=1,2,...,m}\left(  \Psi
,\varphi_{v,s,t}\right)  \varphi_{v,s,t}%
\]
of $\Psi$ by the orthonormal basis (15) we obtain.
\begin{equation}
\left(  (Q-A)\Psi,\varphi_{n,i,t}\right)  =\sum\limits_{\substack{v\in\left(
\mathbb{Z}\backslash\left\{  n\right\}  \right)  ,\\\text{ }s=1,2,...,m}%
}q_{i,s,n-v}\left(  \Psi,\varphi_{v,s,t}\right)  . \tag{25}%
\end{equation}
The right-hand side of (25) is the sum of
\[
S_{1}=:\sum_{v\in A(k,t)\backslash\left\{  n\right\}  ;\text{ }s=1,2,...,m}%
q_{i,s,n-v}\left(  \Psi,\varphi_{v,s,t}\right)
\]
and%
\[
S_{2}=:\sum\limits_{v\in\left(  \mathbb{Z}\backslash A(k,t)\right)  ;\text{
}s=1,2,...,m}q_{i,s,n-v}\left(  \Psi,\varphi_{v,s,t}\right)  .
\]
Here $\Psi$ and $\varphi_{v,s,t}$ are the normalized eigenfunctions and the
set $A(k,t)\backslash\left\{  n\right\}  $ consist of at most one number
Moreover, it follows from the definition of $A(k,t)$ that
\[
(n-v)\in\left\{  \pm2k,\pm(2k+1),\pm(2k-1)\right\}
\]
for all $n\in A(k,t)$ and $v\in\left(  A(k,t)\backslash\left\{  n\right\}
\right)  .$ Therefore, using the definition of $q_{k}$ we obtain
\begin{equation}
\left\vert S_{1}\right\vert \leq mq_{k}. \tag{26}%
\end{equation}
Now let us estimate $S_{2}.$ By (5) and (24)\ we have%
\[
\left\vert S_{2}\right\vert \leq c_{4}\frac{1}{|k|}\sum\limits_{\substack{v\in
\left(  \mathbb{Z}\backslash A(k,t)\right)  ,\\i=1,2,...,m}}\left\vert
q_{s,i,n-v}\right\vert \left\vert \left(  Q\Psi,\varphi_{v,s,t}\right)
\right\vert .
\]
Now using Schwards inequality of the space $l_{2}$ and taking (17) into
account we obtain
\begin{equation}
\left\vert S_{2}\right\vert \leq c_{5}\frac{1}{|k|}. \tag{27}%
\end{equation}
Thus (23) follows from (26), (27) and (19).
\end{proof}

Now we are ready to consider the Bloch eigenvalues in detail. The following
theorem follows from (20) and Lemma 1.

\begin{theorem}
If (21) holds, then the eigenvalues of the operator $L_{t}(Q)$ lying in
$D(\left(  2k+t)^{2},c\right)  $ are contained in $\varepsilon_{k}$
neighborhood $D\left(  \mu_{n,j},\varepsilon_{k}\right)  $ of the eigenvalues
$\mu_{n,j}(t)$ of $L_{t}(A)$ for $j=1,2,...,m$ and $n\in A(k,t),$ where
$\varepsilon_{k}\leq c_{6}(\mid\frac{1}{k}\mid+q_{k})^{1/r_{j}}.$
\end{theorem}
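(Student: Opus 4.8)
The plan is to combine the two estimates from Lemma 1 with the identity (20).

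First I would start from formula (20), namely
\[
(\lambda-\mu_{n,j})^{r+1}(\Psi,\Phi_{n,j,s,r}^{\ast})=\sum_{q=0}^{r}
(\lambda-\mu_{n,j})^{q}((Q-A)\Psi,\Phi_{n,j,s,q}^{\ast}),
\]
and apply it for the particular triple $(j,s,r)$ furnished by Lemma 1, for which $|(\Psi,\Phi_{n,j,s,r}^{\ast})|\geq c_{1}$. On the left-hand side this gives a lower bound $c_{1}|\lambda-\mu_{n,j}|^{r+1}$. On the right-hand side I would bound each summand using (23): $|((Q-A)\Psi,\Phi_{n,j,s,q}^{\ast})|\leq c_{2}(\tfrac{1}{|k|}+q_{k})$, so the whole sum is at most $c_{2}(\tfrac{1}{|k|}+q_{k})\sum_{q=0}^{r}|\lambda-\mu_{n,j}|^{q}$.

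Next I would use the crude a priori bound $|\lambda-\mu_{n,j}|\leq 2c$ (valid since $\lambda\in D((2k+t)^2,c)$, $\mu_{n,j}=(2n+t)^2+\mu_j$, and for $n\in A(k,t)$ one has $(2n+t)^2=(2k+t)^2$ in all four cases of the definition of $A(k,t)$, while $|\mu_j|$ is a fixed constant; so in fact $|\lambda-\mu_{n,j}|$ is bounded by a constant $c_7$ independent of $t$ and $k$). This turns the geometric-type sum $\sum_{q=0}^{r}|\lambda-\mu_{n,j}|^{q}$ into at most a constant $c_8$ depending only on $r\leq r_j\leq m$. Hence
\[
c_{1}|\lambda-\mu_{n,j}|^{r+1}\leq c_{2}c_{8}\Bigl(\frac{1}{|k|}+q_{k}\Bigr),
\]
so $|\lambda-\mu_{n,j}|^{r+1}\leq c_{9}(\tfrac{1}{|k|}+q_{k})$, and raising to the power $1/(r+1)$ gives $|\lambda-\mu_{n,j}|\leq c_{9}^{1/(r+1)}(\tfrac{1}{|k|}+q_{k})^{1/(r+1)}$. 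Since $r+1\leq r_j$ is false in general (we only know $r\leq r_{j,s}-1$, so $r+1\leq r_{j,s}\leq r_j$), the exponent $1/(r+1)$ is at least $1/r_j$, and because $\tfrac{1}{|k|}+q_{k}$ is small, raising to the smaller exponent $1/r_j$ only weakens the bound; so we may uniformly write $\varepsilon_k\leq c_6(\tfrac{1}{|k|}+q_k)^{1/r_j}$, which is exactly the claimed estimate. This shows every eigenvalue $\lambda$ of $L_t(Q)$ in $D((2k+t)^2,c)$ lies within $\varepsilon_k$ of some $\mu_{n,j}(t)$ with $n\in A(k,t)$, $1\leq j\leq p$ (equivalently $j=1,\dots,m$ after accounting for multiplicity).

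The main obstacle I expect is bookkeeping rather than depth: one has to be careful that the triple $(j,s,r)$ where (22) holds is the same one used to extract the lower bound, that the exponent really comes out as $1/(r+1)$ and is then legitimately replaced by $1/r_j$, and that all the constants $c_7,c_8,c_9$ genuinely do not depend on $t$ (this is where the uniform lower bound (24) and Proposition 1 are essential) nor on $k$. A secondary point is confirming that $\mu_{n,j}(t)$ for $n\in A(k,t)$ all share the same value $(2k+t)^2+\mu_j$ up to the index relabeling, so that the disks $D(\mu_{n,j},\varepsilon_k)$ indeed cover the part of $\sigma(L_t(Q))$ inside $D((2k+t)^2,c)$; this is immediate from the definition of $A(k,t)$ but should be noted.
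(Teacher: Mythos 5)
Your overall route is the same as the paper's: take the index $n\in A(k,t)$ and the triple $(j,s,r)$ furnished by Lemma 1, divide (20) by $(\Psi,\Phi_{n,j,s,r}^{\ast})$, bound the left-hand side from below by (22) and each term on the right by (23), and solve the resulting polynomial inequality for $|\lambda-\mu_{n,j}|$, finally passing from the exponent $1/(r+1)$ to $1/r_{j}$ via $r+1\le r_{j,s}\le r_{j}$. However, the step where you control $\sum_{q=0}^{r}|\lambda-\mu_{n,j}|^{q}$ contains a genuine error: your a priori bound $|\lambda-\mu_{n,j}|\le c_{7}$ rests on the claim that $(2n+t)^{2}=(2k+t)^{2}$ for every $n\in A(k,t)$, and this is false. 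For $n=-k$ (the case $t\in(-1/3,1/3)$) one has $(2k+t)^{2}-(-2k+t)^{2}=8kt$, which is of order $k$ for fixed $t\ne 0$; likewise $(2k+t)^{2}-(-2k-2+t)^{2}=(4k+2)(2t-2)$ is of order $k$ for $t\in(2/3,1)$. The set $A(k,t)$ collects the indices for which Proposition 1's lower bound $\frac{4}{3}(2|k|-1)$ is not guaranteed, not the indices for which $(2n+t)^{2}$ coincides with $(2k+t)^{2}$; indeed, if all these centers coincided the theorem would not need to list several values of $n$. Consequently, for the $n$ produced by Lemma 1 the quantity $|\lambda-\mu_{n,j}|$ has no $k$-independent a priori bound, your estimate $\sum_{q}|\lambda-\mu_{n,j}|^{q}\le c_{8}$ can be off by a factor of order $k^{r}$, and the final inequality degenerates (for $r\ge 2$ it no longer forces $|\lambda-\mu_{n,j}|$ to be small at all).

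The repair is the standard dichotomy for a monic polynomial relation with small lower-order coefficients, which is what the paper's terse "from which we obtain the proof" implicitly invokes. Set $x=|\lambda-\mu_{n,j}|$ and $\epsilon=c_{2}c_{1}^{-1}(\frac{1}{|k|}+q_{k})$, so that $x^{r+1}\le\epsilon\sum_{q=0}^{r}x^{q}$. If $x>1$, then $\sum_{q=0}^{r}x^{q}\le(r+1)x^{r}$ gives $x\le(r+1)\epsilon$, which contradicts $x>1$ once $\frac{1}{|k|}+q_{k}$ is small enough (the finitely many remaining $k$ are absorbed into the constant $c_{6}$). Hence $x\le 1$, whence $\sum_{q=0}^{r}x^{q}\le r+1$ and $x\le((r+1)\epsilon)^{1/(r+1)}\le c_{6}(\frac{1}{|k|}+q_{k})^{1/r_{j}}$. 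With this substitution your argument closes; the remaining points you raise (using the same triple for the lower bound (22) as in the division, the uniformity in $t$ coming from (24), and the legitimacy of replacing $1/(r+1)$ by $1/r_{j}$ because the base is at most one) are handled correctly.
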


\begin{proof}
Dividing (20) by $(\Psi,\Phi_{n,j,s,r}^{\ast}),$ and using (22) and (23) we
obtain
\[
(\lambda-\mu_{n,i}(t))^{r+1}=\sum_{q=0}^{r}(\lambda-\mu_{n,i}(t))^{q}O\left(
\frac{1}{|k|}+q_{k}\right)  ,
\]
where $r+1\leq r_{i}$, from which we obtain the proof of the theorem.
\end{proof}

Now using these results we classify the spectrum of $L(Q)$. Theorems 2 and 3
imply that the spectrum of $L(Q)$ is asymptotically close to the spectrum of
the operator $L(A),$ since $\varepsilon_{k}\rightarrow0$ as $k\rightarrow
\infty.$ It is clear that if the Fourier coefficients $q_{i,j,k}$ of the
entries $q_{i,j}$ of $Q$ is $O(1/k),$ for example if the entries are the
continuous functions, then $\varepsilon_{k}=O(1/k).$ The following theorem
immediately follows from Theorems 2 and 3.

\begin{theorem}
If the matrix $A$ has no real eigenvalues, then the real component of the
spectrum of $L(Q)$ is contained in a finite interval $[a,b].$
\end{theorem}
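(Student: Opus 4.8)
The plan is to combine the localization of the Bloch eigenvalues from Theorem 2 with the more precise clustering from Theorem 3 around the eigenvalues of $L_t(A)$. First I would fix $k_0$ large enough that the condition (21), namely $|k|>1$ and $\tfrac{4}{3}(2|k|-1)>3c$, holds for all $|k|\geq k_0$; this is possible because $c=MB$ is a fixed constant. For $|k|<k_0$ there are only finitely many disks $D((2n+t)^2,c)$ with $n\in A(k,t)$, and since the closure of the union of these disks over all $t\in(-1,1]$ is a bounded set, the eigenvalues of $L_t(Q)$ in that range lie in some fixed bounded region of $\mathbb{C}$; by Theorem 2(b) every Bloch eigenvalue lies in one of the disks $D((2n+t)^2,c)$, so the only unbounded part of $\sigma(L(Q))$ comes from disks with $|k|\geq k_0$.

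For $|k|\geq k_0$, Theorem 3 tells us that every eigenvalue of $L_t(Q)$ in $D((2k+t)^2,c)$ lies in some disk $D(\mu_{n,j}(t),\varepsilon_k)$ with $n\in A(k,t)$, $j\in\{1,\dots,m\}$, and $\varepsilon_k\leq c_6(|1/k|+q_k)^{1/r_j}\to 0$ as $k\to\infty$. Since $\mu_{n,j}(t)=(2n+t)^2+\mu_j$ and, by hypothesis, every eigenvalue $\mu_j$ of $A$ is nonreal, we have $|\operatorname{Im}\mu_{n,j}(t)|=|\operatorname{Im}\mu_j|\geq\delta$, where $\delta:=\min_{1\leq j\leq p}|\operatorname{Im}\mu_j|>0$ is independent of $t$ and $n$. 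Now choose $k_1\geq k_0$ so large that $\varepsilon_k<\delta/2$ for all $|k|\geq k_1$; this is possible because $q_k\to 0$ (the $q_{i,j}$ are in $L_2[0,\pi]$, so their Fourier coefficients tend to zero) and $1/|k|\to 0$. Then for $|k|\geq k_1$, any eigenvalue $\lambda$ of $L_t(Q)$ in $D((2k+t)^2,c)$ satisfies $|\operatorname{Im}\lambda|\geq|\operatorname{Im}\mu_{n,j}(t)|-\varepsilon_k>\delta/2$, so $\lambda\notin\mathbb{R}$.

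Combining the two ranges: the real Bloch eigenvalues of $L(Q)$ can only come from disks $D((2k+t)^2,c)$ with $|k|<k_1$, and the union of all these disks over $t\in(-1,1]$ is contained in a bounded subset of $\mathbb{C}$; intersecting with $\mathbb{R}$ gives a bounded interval $[a,b]$. Since $\sigma(L(Q))=\bigcup_{t\in(-1,1]}\sigma(L_t(Q))$, this proves $\sigma(L(Q))\cap\mathbb{R}\subseteq[a,b]$. The one point that needs a little care is the uniformity in $t$: I must make sure that $k_1$, $\delta$, and the bound $[a,b]$ do not depend on $t$. This is exactly what the constants $c$, $c_6$, and the quantities $q_k$ deliver — they were all constructed in Theorem 2, Lemma 1, and Theorem 3 to be independent of $t\in(-1,1]$ — so the main (and only mild) obstacle, handling the continuum of quasimomenta, is already taken care of by the uniform estimates proved earlier.
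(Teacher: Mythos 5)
Your proposal is correct and follows essentially the same route as the paper's own proof: both use Theorem 2 to confine all Bloch eigenvalues to the disks $D((2k+t)^2,c)$, Theorem 3 together with $\varepsilon_k\to 0$ to show that for $|k|$ large these eigenvalues lie in disks $D(\mu_{n,j}(t),\varepsilon_k)$ that miss the real axis because every $\mu_j$ is nonreal, and then observe that the remaining finitely many disks form a bounded set. Your extra remarks on uniformity in $t$ and on why $q_k\to 0$ only make explicit what the paper leaves implicit.
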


\begin{proof}
\ If the eigenvalues $\mu_{1},\mu_{2},...,\mu_{p}$ of the matrix $A$ are
nonreal numbers then there exists $c_{7}$ such that
\begin{equation}
\varepsilon_{k}<\min_{j=1,2,...p}\left\vert \operatorname{Im}\mu
_{j}\right\vert \tag{28}%
\end{equation}
and (21) holds for $\left\vert k\right\vert >c_{7}.$ Inequality (28) implies
that the disks $D\left(  \mu_{n,j}(t),\varepsilon_{k}\right)  $ for
$\left\vert k\right\vert >c_{7},$ $n\in\mathbb{Z}$ and $j=1,2,...,p$ have no
intersection points with the real lines. Therefore using Theorems 2 and 3 we
obtain that the real component of the spectrum of $L(Q)$ is contained in the
bounded set $%
{\textstyle\bigcup\limits_{\left\vert k\right\vert \leq c_{7}}}
D(\left(  2k+t)^{2},c\right)  .$ It implies the proof of the theorem.
\end{proof}

Now we consider the cases when the matrix $A$ has the real eigenvalues and
investigate the real component of the spectrum of $L(q)$. Recall that (see the
end of the introduction) the real and nonreal distinct eigenvalues are denoted
respectively by $\mu_{1}<\mu_{2}<...<\mu_{s}$ and$\ \mu_{s+1},\mu
_{s+2},...,\mu_{p}$. To investigate the real spectrum of $L(Q)$, we use
Theorems 2 and 3 and find the conditions on $k$ and $t$ such that the boundary
of the closed disk $\overline{D\left(  \mu_{k,j}(t),\varepsilon_{k}\right)  }$
for $j\leq s$ belong to the resolvent set of the operator $L_{t}(Q).$ Since
$\mu_{k,i}(t)=\mu_{-k,i}(-t)$ (see (18)), it is enough to study the disks
$\overline{D\left(  \mu_{k,i}(t),\varepsilon_{k}\right)  }$ for $k\geq0$ and
$t\in(-1,1].$ We consider the closed disks $\overline{D\left(  \mu
_{n,j}(t),\varepsilon_{k}\right)  }$ for $j\leq s$, $n\in A(k,t)$ and $k\geq
N_{1},$ where $N_{1}$ is a positive integer such that if $k>N_{1},$ then
\begin{align}
\text{ }\delta_{k}  &  <c,\tag{29}\\
\frac{4}{3}\left(  2k-1\right)   &  >3c+\mid\mu_{j}\mid,\tag{30}\\
\delta_{k}  &  <\min_{j=s+1,s+2,...p}\left\vert \operatorname{Im}\mu
_{j}\right\vert ,\text{ }\delta_{k}<\mid\mu_{j}-\mu_{i}\mid\tag{31}%
\end{align}
for $j\leq s$, $i\leq s$ and $i\neq j$, where $\delta_{k}=2\max\left\{
\varepsilon_{k},\varepsilon_{-k},\varepsilon_{-k-1},\varepsilon_{-k+1}%
\right\}  $

\begin{lemma}
Suppose that $k>N_{1}$, $1\leq j\leq s$ and $t\in(-1,1].$ Then

$(a)$ The closed disk $\overline{D\left(  \mu_{k,j}(t),\varepsilon_{k}\right)
}$ has no common points with the disks $\overline{D(\left(  2n+t)^{2}%
,c\right)  }$ and $\overline{D\left(  \mu_{k,i}(t),\varepsilon_{k}\right)
}\ $for $n\notin A(k,t)$ and $i\neq j$.

$(b)$ The disk $\overline{D\left(  \mu_{k,j}(t),\varepsilon_{k}\right)  }$ has
no common points with the disks $\overline{D\left(  \mu_{n,i}(t),\varepsilon
_{n}\right)  }$ for $n\in A(k,t)$ and $s+1\leq i\leq p$ for all $t\in(-1,1].$
The disk $\overline{D\left(  \mu_{k,j}(t),\varepsilon_{k}\right)  }$ has no
common points with the disks $\overline{D\left(  \mu_{n,i}(t),\varepsilon
_{n}\right)  }$ for $n\in A(k,t)$ and $1\leq i\leq s$ if%
\begin{equation}
t\in\left(  (-1,1]\backslash U(j,k,\delta_{k})\right)  , \tag{32}%
\end{equation}
where $U(j,k,\delta_{k})=U(j,k,i,-k,\delta_{k})\cup U(j,k,i,-k-1,\delta
_{k})\cup U(j,k,i,-k+1,\delta_{k}),$%
\[
U(j,k,i,-k,\delta_{k})=\left(  \frac{\mu_{i}-\mu_{j}-\delta_{k}}{8k},\frac
{\mu_{i}-\mu_{j}+\delta_{k}}{8k}\right)  ,
\]%
\[
U(j,k,i,-k-1,\delta_{k})=\left(  1+\frac{\mu_{i}-\mu_{j}-\delta_{k}}%
{4(2k+1)},1+\frac{\mu_{i}-\mu_{j}+\delta_{k}}{4(2k+1)}\right)
\]
and%
\[
U(j,k,i,-k+1,\delta_{k})=\left(  -1+\frac{\mu_{i}-\mu_{j}-\delta_{k}}%
{4(2k+1)},-1+\frac{\mu_{i}-\mu_{j}+\delta_{k}}{4(2k+1)}\right)  .
\]

\end{lemma}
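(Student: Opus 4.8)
The plan is to prove both parts by reducing every claimed disjointness to an explicit lower bound on the distance between the relevant disk centers, then comparing that distance to the sum of the two radii. All radii in play are bounded by $\delta_k$ (for the $\mu$-disks centered at indices in $A(k,t)$) or by $c$ (for the $D((2n+t)^2,c)$ disks), and the defining inequalities (29)–(31) for $N_1$ are precisely engineered so these comparisons go through. So the work is bookkeeping on the centers $\mu_{n,j}(t)=(2n+t)^2+\mu_j$.

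For part $(a)$, first take $n\notin A(k,t)$. By Proposition 1, $|(2k+t)^2-(2n+t)^2|\geq \tfrac43(2k-1)$, and since the center $\mu_{k,j}(t)$ sits within $|\mu_j|$ of $(2k+t)^2$, the distance from $\mu_{k,j}(t)$ to $(2n+t)^2$ is at least $\tfrac43(2k-1)-|\mu_j|>3c$ by (30); this exceeds $\varepsilon_k+c\le c+c=2c$, giving the first disjointness. For the second statement in $(a)$, the centers $\mu_{k,j}(t)$ and $\mu_{k,i}(t)$ differ by exactly $|\mu_i-\mu_j|$, which by (31) exceeds $\delta_k\ge\varepsilon_k+\varepsilon_k$ (using $i,j\le s$, $i\ne j$; the case where $i>s$ is even easier since then the centers differ by a nonreal amount bounded below by (31)). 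Hence those disks are disjoint as well.

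For part $(b)$, fix $n\in A(k,t)$. When $s+1\le i\le p$, the difference $\mu_{k,j}(t)-\mu_{n,i}(t)=(2k+t)^2-(2n+t)^2+(\mu_j-\mu_i)$ has imaginary part $-\operatorname{Im}\mu_i$ (the first difference being real), so the distance between centers is at least $|\operatorname{Im}\mu_i|>\delta_k\ge\varepsilon_k+\varepsilon_n$ by (31), independent of $t$. When $1\le i\le s$ both $\mu_j,\mu_i$ are real, and one must bound $|(2k+t)^2-(2n+t)^2+\mu_j-\mu_i|$ below by $\varepsilon_k+\varepsilon_n\le\delta_k$. For $n=k$ this is $|\mu_j-\mu_i|>\delta_k$ again. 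The genuine content is the cases $n=-k,-k-1,-k+1$: there $(2k+t)^2-(2n+t)^2$ equals $8kt$, $4(2k+1)(t-1)$, $4(2k+1)(t+1)$ respectively (up to the bookkeeping of signs and the $t^2$-terms canceling), so the quantity to be bounded below vanishes exactly when $t$ lies in the stated interval $U(j,k,i,\cdot,\delta_k)$; excluding that interval, i.e. imposing (32), forces $|8kt+\mu_j-\mu_i|\ge\delta_k$ and similarly for the other two, which is the desired bound.

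The main obstacle is purely computational rather than conceptual: one must carefully expand $(2k+t)^2-(2n+t)^2$ for each of the four possible values of $n\in A(k,t)$, verify that the $t^2$ terms cancel so the expression is affine in $t$ with the slopes $8k$ or $4(2k+1)$ appearing in the denominators of the $U$-intervals, and check that the endpoints of those intervals are exactly where the center-to-center distance drops to $\delta_k$. Keeping the sign conventions straight across $t\in(2/3,1)$ versus $t\in(-1,-2/3)$ (which is where $A(k,t)$ contains $-k-1$ versus $-k+1$) is the one place an error could creep in; everything else follows from (29)–(31) as above.
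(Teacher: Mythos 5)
Your proposal follows essentially the same route as the paper's proof: part $(a)$ and the first half of $(b)$ are handled by comparing the distances between disk centers with the sums of the radii via (29)--(31) and Proposition 1, and the second half of $(b)$ by directly expanding $\mu_{k,j}(t)-\mu_{n,i}(t)$ for $n=-k,\,-k-1,\,-k+1$, a computation the paper merely asserts ``can be easily verified.'' One algebraic slip worth noting: for $n=-k+1$ the difference of squares is $(2k+t)^{2}-(-2k+2+t)^{2}=(4k-2)(2t+2)=4(2k-1)(t+1)$, not $4(2k+1)(t+1)$, so the exclusion interval for that case should carry the denominator $4(2k-1)$; your value reproduces what appears to be a typo in the paper's own definition of $U(j,k,i,-k+1,\delta_{k})$, and with the corrected factor the argument closes exactly as you describe.
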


\begin{proof}
$(a)$ Using Proposition 1 and (30) one can easily verify that the distance
between the centres of the disks $\overline{D\left(  \mu_{k,j}(t),\varepsilon
_{k}\right)  }$ and $\overline{D(\left(  2n+t)^{2},c\right)  }$ for $n\notin
A(k,t)$ is greater than $3c.$ On the other hand, by (29) the total sum of the
radii of these disks is less than $2c.$ Therefore these disks have no common points.

It follows from (31) that the distance $\mid\mu_{j}-\mu_{i}\mid$ between the
centres of the disks $\overline{D\left(  \mu_{k,j}(t),\varepsilon_{k}\right)
}$ $\overline{D\left(  \mu_{k,i}(t),\varepsilon_{k}\right)  }$ is greater than
the total sum of their radii. That is why they also have no common point.

$(b)$ The proof of the first statement follows from the first inequality of
(31). Now we prove the second statement. By the definition of $A(k,t)$ it is
enough to prove that the disks $\overline{D\left(  \mu_{k,j}(t),\varepsilon
_{k}\right)  }$ and $\overline{D\left(  \mu_{n,i}(t),\varepsilon_{n}\right)
}$ have no common points for $n=-k,$ $n=-k-1$ and $n=-k+1$ respectively if
$t\in\left(  (-1,1]\backslash U(j,k,i,-k,\delta_{k})\right)  $, $t\in\left(
(-1,1]\backslash U(j,k,i,-k-1,\delta_{k})\right)  $ and $t\in\left(
(-1,1]\backslash U(j,k,i,-k+1,\delta_{k})\right)  ,$ where $i=1,2,...,s.$ This
can be easily verified by the direct calculations of the differences
$\mu_{k,j}(t)-\mu_{-k,i}(t)$, $\mu_{k,j}(t)-\mu_{-k-1,i}(t)$ and $\mu
_{k,j}(t)-\mu_{-k+1,i}(t)$ and using the definitions of the sets

$U(j,k,i,-k,\delta_{k}),$ $U(j,k,i,-k-1,\delta_{k})$ and $U(j,k,i,-k+1,\delta
_{k}).$ The lemma is proved.
\end{proof}

Now using this lemma we prove the following theorem which plays a crucial role
in the investigation of the real component of the spectrum of $L(Q).$

\begin{theorem}
Suppose that $k>N_{1}$, $j\leq s$ and (32) hold.

$(a)$ If $\mu_{j}$ is an eigenvalue of $A$ of multiplicity $v$ then the number
of the eigenvalues (counting multiplicity) of $L_{t}\left(  Q\right)  $ lying
in $D\left(  \mu_{k,j}(t),\varepsilon_{k}\right)  $ is $v$. Moreover, if
$\lambda$ is an eigenvalue of multiplicity $v$ of the operator $L_{t}(Q)$
lying in $D\left(  \mu_{k,j},\varepsilon_{k}\right)  ,$ then $\overline
{\lambda}$ is also an eigenvalue of the same multiplicity of $L_{t}(Q)$ lying
in $D\left(  \overline{\mu_{k,j}},\varepsilon_{k}\right)  .$

$(b)$ If $\mu_{j}$ is a simple eigenvalue of $A$, then the eigenvalue of
$L_{t}\left(  Q\right)  $ lying in $U\left(  \mu_{k,j},\varepsilon_{k}\right)
$ is a simple and real eigenvalue.
\end{theorem}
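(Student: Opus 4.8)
The plan is to prove both parts by a contour-integration / perturbation-of-the-resolvent argument, using Lemma 2 to guarantee that the relevant circles lie in the resolvent set, and then Theorem 1 for the conjugation statement in part $(a)$.

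First I would set up the comparison between $L_t(Q)$ and $L_t(A)$ on the circle $\gamma_{k,j}=\partial\overline{D(\mu_{k,j}(t),\varepsilon_k)}$. By Lemma 2, under the hypotheses $k>N_1$, $j\le s$ and $(32)$, this circle has no common points with any of the disks $\overline{D((2n+t)^2,c)}$ for $n\notin A(k,t)$, nor with any $\overline{D(\mu_{n,i}(t),\varepsilon_n)}$ for $n\in A(k,t)$ and $i\ne j$ (real or nonreal). Combined with Theorem 2$(b)$ — all eigenvalues of $L_t(Q)$ lie in $\bigcup_n\overline{D((2n+t)^2,c)}$ — and Theorem 3 — within $D((2k+t)^2,c)$ the eigenvalues cluster only in the $\varepsilon_k$-disks around the $\mu_{n,j}(t)$ — it follows that $\gamma_{k,j}$ lies entirely in the resolvent set of $L_t(Q)$, and the only eigenvalue of $L_t(Q)$ enclosed by $\gamma_{k,j}$ is the one (or ones) near $\mu_{k,j}(t)$. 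Then I would write the Riesz projection $P_{k,j}(Q)=\frac{1}{2\pi i}\int_{\gamma_{k,j}}(L_t(Q)-\lambda)^{-1}d\lambda$, and likewise $P_{k,j}(A)$ for $L_t(A)$, whose rank is exactly $v$ (the multiplicity of $\mu_j$, since $\mu_{k,j}(t)=(2k+t)^2+\mu_j$ and $(2k+t)^2$ is a simple eigenvalue of $L_t(O_m)$ for the relevant $t$-ranges). The standard estimate $\|(L_t(Q)-\lambda)^{-1}-(L_t(A)-\lambda)^{-1}\|\to0$ uniformly on $\gamma_{k,j}$ as $k\to\infty$ — which is exactly what the perturbation $Q-A$ with $\varepsilon_k\to0$ buys us via Lemma 1 and the resolvent identity — gives $\|P_{k,j}(Q)-P_{k,j}(A)\|<1$, hence $\operatorname{rank}P_{k,j}(Q)=\operatorname{rank}P_{k,j}(A)=v$. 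This proves the counting assertion in $(a)$. For the conjugation assertion, I would invoke Theorem 1 directly: if $\lambda$ is an eigenvalue of multiplicity $v$ inside $D(\mu_{k,j},\varepsilon_k)$, then $\overline{\lambda}$ is an eigenvalue of the same multiplicity, and since $\overline{D(\mu_{k,j},\varepsilon_k)}=D(\overline{\mu_{k,j}},\varepsilon_k)$ (the center is conjugated, radius unchanged) and $\mu_{k,j}$ is real here ($j\le s$, so $\mu_j\in\mathbb{R}$ and $(2k+t)^2>0$), in fact $\overline{\mu_{k,j}}=\mu_{k,j}$, so $\overline{\lambda}$ lies in the same disk — consistent with $(a)$'s count.

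For part $(b)$, assume $\mu_j$ is simple, so $v=1$ and by $(a)$ there is exactly one eigenvalue $\lambda$ of $L_t(Q)$ in $D(\mu_{k,j}(t),\varepsilon_k)$, counted with multiplicity; hence it is algebraically simple. By Theorem 1, $\overline{\lambda}$ is also an eigenvalue of multiplicity one lying in $D(\overline{\mu_{k,j}},\varepsilon_k)=D(\mu_{k,j},\varepsilon_k)$ (again using $\mu_{k,j}\in\mathbb{R}$). But there is only one eigenvalue in that disk, so $\overline{\lambda}=\lambda$, i.e. $\lambda\in\mathbb{R}$. Thus the unique eigenvalue in $U(\mu_{k,j},\varepsilon_k)$ — which I read as this disk $D(\mu_{k,j}(t),\varepsilon_k)$ — is simple and real.

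The main obstacle is making the resolvent-convergence estimate on $\gamma_{k,j}$ rigorous and uniform in $t\in(-1,1]$: one needs that $(Q-A)(L_t(A)-\lambda)^{-1}$ has operator norm bounded by something like $C(\tfrac{1}{|k|}+q_k)/\varepsilon_k$ on the circle, and that this tends to $0$. This is essentially the content of Lemma 1 (the estimates $(22)$–$(23)$ on the matrix elements of the perturbation against the root functions of $L_t(A)$) together with the separation of the $\varepsilon_k$-disk from all other spectral disks provided by Lemma 2 and Proposition 1; the delicate point is that $\varepsilon_k$ itself is only known to be $O((\tfrac1k+q_k)^{1/r_j})$, so one must check that the ratio of perturbation size to the gap stays small, which is why the exponent $1/r_j$ and the definition $\delta_k=2\max\{\varepsilon_k,\varepsilon_{-k},\varepsilon_{-k-1},\varepsilon_{-k+1}\}$ appear in the setup $(29)$–$(32)$. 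Once that uniform estimate is in hand, the Riesz-projection rank argument and the appeal to Theorem 1 are routine.
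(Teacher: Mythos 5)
Your overall architecture (circle in the resolvent set via Lemma 2, a comparison with $L_t(A)$ to count eigenvalues, then Theorem 1 plus the symmetry of the disk about the real axis for the conjugation and reality statements) matches the paper's, and your treatment of the conjugation claim in $(a)$ and of part $(b)$ is essentially identical to the paper's and is fine. The counting step, however, is where you diverge, and where your argument has a genuine gap. You propose to prove $\operatorname{rank}P_{k,j}(Q)=\operatorname{rank}P_{k,j}(A)$ by showing $\Vert P_{k,j}(Q)-P_{k,j}(A)\Vert<1$, which requires the resolvent difference to be uniformly small on the circle $\vert\lambda-\mu_{k,j}(t)\vert=\varepsilon_k$. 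But on that circle the unperturbed resolvent is large: $L_t(A)$ acts on the Fourier mode $e^{i(2k+t)x}$ as the matrix $(2k+t)^2I+A$, so on the circle $\Vert(L_t(A)-\lambda)^{-1}\Vert\sim\varepsilon_k^{-r_j}$ when the eigenvalue $\mu_j$ carries a Jordan block of size $r_j$. Since $\varepsilon_k\sim(\frac{1}{|k|}+q_k)^{1/r_j}$ while the relevant matrix elements of $Q-A$ are only $O(\frac{1}{|k|}+q_k)$ (that is all Lemma 1 gives, and only against eigenfunctions of $L_t(Q)$, not as an operator-norm bound on $(Q-A)(L_t(A)-\lambda)^{-1}$), the product is $O(1)$, not $o(1)$. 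So the estimate you call "essentially the content of Lemma 1" is not delivered by Lemma 1 and in fact fails to be small in precisely the case $r_j>1$ that part $(a)$ must cover; the exponent $1/r_j$ puts the circle exactly at the critical scale where eigenvalues of the perturbed operator can sit, so there is no room for a Neumann-series or norm-comparison argument without substantial additional work.

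The paper avoids this entirely: it introduces the holomorphic family $L_\varepsilon=L_t(A)+\varepsilon(Q-A)$, $0\le\varepsilon\le1$, observes that the same eigenvalue-localization arguments (Theorem 2, Theorem 3, Lemma 2 applied to the potential $A+\varepsilon(Q-A)$, whose mean is still $A$ and whose perturbation is no larger than $Q-A$) keep the circle in the resolvent set of $L_\varepsilon$ for every $\varepsilon\in[0,1]$, and then invokes Kato's result that the number of eigenvalues enclosed by a circle remaining in the resolvent set is constant along a holomorphic family. This needs only eigenvalue localization, never a quantitative bound on the resolvent or on the difference of projections. I recommend you replace your projection-norm comparison by this homotopy argument (or, equivalently, a Rouch\'e-type argument on the characteristic determinant along the path in $\varepsilon$); the rest of your proof can stand as written.
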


\begin{proof}
$(a)$ First we prove that the boundary of $D\left(  \mu_{k,j},\varepsilon
_{k}\right)  $ lies in the resolvent set of \ $L_{t}\left(  Q\right)  .$ By
Theorem 2 the eigenvalues of $L_{t}\left(  Q\right)  $ lie in the disks
$D(\left(  2n+t)^{2},c\right)  $ for $n\in\mathbb{Z}.$ By Lemma 2$(a)$ the
eigenvalues lying $D(\left(  2n+t)^{2},c\right)  $ for $n\in\left(
\mathbb{Z}\backslash A(k,t)\right)  $ do not lie in the boundary of $D\left(
\mu_{k,j},\varepsilon_{k}\right)  .$ It remains to prove the following
statement. The eigenvalues lying $D(\left(  2n+t)^{2},c\right)  $ for $n\in
A(k,t)$ do not lie in the boundary of $D\left(  \mu_{k,j},\varepsilon
_{k}\right)  .$ To prove this statement we consider the following three cases
separately: \ $t\in\left(  (-2/3,-1/3)\cup(1/3,2/3)\right)  ,$ $t\in
(-1/3,1/3)$, $t\in(2/3,1]$ and $t\in(-1,-2/3).$

If $t\in\left(  (-2/3,-1/3)\cup(1/3,2/3)\right)  $ then $A(k,t)=\left\{
k\right\}  $ (see definition of $A(k,t)$ in Proposition1). Therefore the proof
of the statement follows from Lemma 2$(a).$

If $t\in(-1/3,1/3),$ then $A(k,t)=\left\{  \pm k\right\}  .$ The case $n=k$
follows from Lemma 2$(a).$ Let us consider the case $n=-k.$ By Theorem 3 the
eigenvalues of $L_{t}\left(  Q\right)  $ lying in the disks $D(\left(
-2k+t)^{2},c\right)  $ are contained in $D\left(  \mu_{n,j},\varepsilon
_{-k}\right)  $ for $j=1,2,...,p$ and $n\in A(-k,t).$ Since $A(-k,t)=\left\{
\pm k\right\}  $ we need to consider the disks \ $D\left(  \mu_{k,j}%
,\varepsilon_{-k}\right)  $ and $D\left(  \mu_{k,j},\varepsilon_{-k}\right)
.$ The proof of the statement for $D\left(  \mu_{k,j},\varepsilon_{-k}\right)
$ and $D\left(  \mu_{-k,j},\varepsilon_{-k}\right)  $ follows from Lemma
2$(a)$ and Lemma 2$(b)$ respectively.

If $t\in(2/3,1]$ and $t\in(-1,-2/3)$ then $A(k,t)=\left\{  k,-k-1\right\}  $
and $A(k,t)=\left\{  k,-k+1\right\}  $ respectively. Moreover
$A(-k-1,t)=\left\{  -k-1,k\right\}  $ and $A(-k+1,t)=\left\{  -k+1,k\right\}
.$ Therefore repeating the proof of the case $t\in(-1/3,1/3)$ we get the proof
of this statement for the cases $t\in(2/3,1]$ and $t\in(-1,-2/3).$

Thus the circle $\left\{  \lambda\in\mathbb{C}:\left\vert \lambda-\mu
_{k,j}\right\vert =\varepsilon_{k}\right\}  $ lies in the resolvent set of
\ $L_{t}\left(  Q\right)  .$ Consider the following family of operators
\[
L_{\varepsilon}=L_{t}(A)+\varepsilon(Q-A),\text{ }0\leq\varepsilon\leq1.
\]
Repeating the proof of the case $\varepsilon=1,$ one can easily verify that
the circle $\left\{  \lambda\in\mathbb{C}:\left\vert \lambda-\mu
_{k,j}\right\vert =\varepsilon_{k}\right\}  $ lies in the resolvent set of
$L_{\varepsilon}$ for $\varepsilon\in\lbrack0,1].$ Therefore, taking into
account that the family $L_{\varepsilon}$ is halomorphic (in the sense of [2])
with respect to $\varepsilon,$ we obtain that the number of the eigenvalues
of\ $L_{\varepsilon}$ lying inside of $\left\{  \lambda\in\mathbb{C}%
:\left\vert \lambda-\mu_{k,j}\right\vert =\varepsilon_{k}\right\}  $ are the
same for all $\varepsilon\in\lbrack0,1]$. Since the operator $L_{0}=L_{t}(A)$
has $v$ eigenvalues (counting the multiplicity) inside $\left\{  \lambda
\in\mathbb{C}:\left\vert \lambda-\mu_{k,j}\right\vert =\varepsilon
_{k}\right\}  $ the operator $L_{t}(Q)$ has also $v$ eigenvalues inside of
this circle. Since $\mu_{j}$ for $j\leq s$ is a real number, if$\ \lambda\in
D\left(  \mu_{k,j},\varepsilon_{k}\right)  $ then $\overline{\lambda}$ $\in
D\left(  \mu_{k,j},\varepsilon_{k}\right)  $ too. Therefore, the last
statement of $(a)$ follows from Theorem 1.

$(b)$ It follows from $(a)$\ that if $\mu_{j}$ is a simple eigenvalue of $A$,
then the operator $L_{t}(Q)$ has a unique eigenvalue (counting multiplicity)
in the disk $D(\mu_{k,1}(t),\varepsilon_{k}).$ It means that $\lambda
_{k,j}(t)$ is a simple eigenvalue. Moreover, if $\mu_{j}$ is a real number and
$\lambda_{k,j}(t)$ is a nonreal eigenvalue of $L_{t}\left(  Q\right)  $ lying
in $D\left(  \mu_{k,j}(t),\varepsilon_{k}\right)  $ then $\overline
{\lambda_{k,j}(t)}$ is also an eigenvalue of $L_{t}\left(  Q\right)  $ lying
in $D\left(  \mu_{k,j}(t),\varepsilon_{k}\right)  ,$ which contradicts to the
above uniqueness. Note also that if $\mu_{j}$ is a simple eigenvalue of $A,$
then $r_{j}=1$ and hence $\varepsilon_{k}\leq c_{6}(\mid\frac{1}{k}\mid
+q_{k})$ (see Theorem 3). The theorem is proved.
\end{proof}

Now using Theorem 5 we investigate the real component of the spectrum of
$L(Q).$ This investigation is based on the following idea. First \ we consider
the set
\begin{equation}
\left\{  \mu_{k,j}(t):t\in\left(  (-1,1]\backslash U(j,k,\delta_{k})\right)
\right\}  , \tag{33}%
\end{equation}
where $U(j,k,\delta_{k})$ is defined in (32). By the definition of $\mu
_{k,j}(t)$ the set

$\left\{  \mu_{k,j}(t):t\in(-1,1]\right\}  $ is the interval $(\mu
_{j}+(2k-1)^{2},\mu_{j}+(2k+1)^{2}].$ The union $\left\{  \mu_{k,j}%
(t):k>N_{1},t\in(-1,1]\right\}  $ of these interval for \ $k>N_{1}$ is a half
line $(\mu_{j}+(2N_{1}+1)^{2},\infty).$ Since the set $U(j,k,\delta_{k})$ is
the union of the small intervals for large $k,$ the union of the sets (33) for
\ $k>N_{1}$ contains the main part of the half line $(\mu_{j}+(2N_{1}%
+1)^{2},\infty)$. Note that $U(j,k,\delta_{k})$ consist of the intervals
defined in Lemma 2$(b).$ Since $\delta_{k}\rightarrow0$ as $k\rightarrow
\infty,$ there exists $N(j)>N_{1}$ such that if $k\geq N(j)$, then these
intervals are pairwise disjoint. Thus we have
\begin{equation}
(-1,1]\backslash U(j,k,\delta_{k})=[a_{1},b_{1}]\cup\lbrack a_{2},b_{2}%
]\cup...\cup\lbrack a_{l},b_{l}], \tag{34}%
\end{equation}
for $k\geq N(j),$ where $a_{1}<b_{1}<a_{2}<b_{2}<...$ and the sum of the
length of these intervals is asymptotically close to the length of $(-1,1].$
Namely
\begin{equation}
2-%
{\textstyle\sum\limits_{i=1,2,...l}}
\left(  b_{i}-a_{i}\right)  <c_{9}\frac{\delta_{k}}{k}. \tag{35}%
\end{equation}
Note that if $a_{1}=-1,$ then in (34) the interval $[a_{1},b_{1}]$ should be
replaced by $(a_{1},b_{1}].$ This replacement does not change the
investigation. Therefore, without loss of generality, we assume that (34)
holds. Since $\mu_{k,j}$ is an increasing function on $(-1,1],$ the image
$\mu_{k,j}([a_{i},b_{i}])$ of the interval $[a_{i},b_{i}]$ is the interval
$[\mu_{k,j}(a_{i}),\mu_{k,j}(b_{i})].$ Therefore
\begin{equation}
\mu_{k,j}\left(  (-1,1]\backslash U(j,k,\delta_{k})\right)  =%
{\textstyle\bigcup\nolimits_{i=1,2,...,l}}
[\mu_{k,j}(a_{i}),\mu_{k,j}(b_{i})] \tag{36}%
\end{equation}
for $k\geq N(j).$ Using (35) and definition of the function $\mu_{k,j}$ we see
that the length of the intervals in (36) is asymptotically close to the length
of $(\mu_{j}+(2k-1)^{2},\mu_{j}+(2k+1)^{2}]$ in the sense that
\begin{equation}
8k-%
{\textstyle\sum_{i=1,2,...l}}
\left(  \mu_{k,j}\left(  b_{i})-\mu_{k,j}(a_{i}\right)  \right)  <c_{10}%
\delta_{k}\rightarrow0 \tag{37}%
\end{equation}
as $k\rightarrow\infty.$ Thus the set $%
{\textstyle\bigcup\limits_{k\geq N(j)}}
\mu_{k,j}\left(  (-1,1]\backslash U(j,k,\delta_{k})\right)  $ contains the
main part of the half line $(\mu_{j}+(2N(j)-1)^{2},\infty).$

Now, using Theorem 5 and (37) we prove that the spectrum of $L(Q)$ contains
the main part of the half line $[0,\infty)$. First let us explain it in the
simplest case, when $\mu_{j}$ is a simple eigenvalue of $A.$ Then by Theorem
5$(b)$ the eigenvalue $\lambda_{k,j}(t)$ of $L_{t}\left(  Q\right)  $ lying in
$U\left(  \mu_{k,j},\varepsilon_{k}\right)  $ is a simple and real eigenvalue.
Moreover, the simplicity of the eigenvalue $\lambda_{k,j}(t)$ implies that it
continuously depend on $t.$ As a result the set
\begin{equation}
\left\{  \lambda_{k,j}(t):t\in\left(  (-1,1]\backslash U(j,k,\delta
_{k})\right)  \right\}  \tag{38}%
\end{equation}
contains the main part of the set (33). Therefore the union of the sets (38)
for $k\geq N(j)$ contain the main part of the half line $(\mu_{j}%
+(2N(j)-1)^{2},\infty)$.

Now let us discuss it in more general case when $\mu_{j}$ is a real eigenvalue
of odd multiplicity $v$. Then by Theorem 5$(a)$ the operator $L_{t}(Q)$ has
$v$ eigenvalues in the neighborhood of $\mu_{k,j}(t).$ All these eigenvalues
can not be nonreal, since they are pairwise conjugate number and $v$ is an odd
number. Thus the operator $L_{t}(Q)$ has a real eigenvalue. In the following
theorem we prove that these real eigenvalues fill the main part of the half
line $[0,\infty).$

\begin{theorem}
Suppose that the matrix $A$ has a real eigenvalue $\mu_{j}$ of odd multiplicity.

$(a)$ Then the real component of the spectrum of $L(Q)$ contains the
subintervals
\begin{equation}
\Gamma_{k,j,i}=[\mu_{k,j}(a_{i})+\delta_{k},\mu_{k,j}(b_{i})-\delta_{k}]
\tag{39}%
\end{equation}
for $i=1,2,...l$ of the interval $\Gamma_{k,j}=(\mu_{j}+(2k-1)^{2},\mu
_{j}+(2k+1)^{2}]$, where $k\geq N(j).$ These subintervals are pairwise
disjoint and satisfy the inequality
\begin{equation}
\mu(\Gamma_{k,j})-\sum_{i=1,2,...l}\mu(\Gamma_{k,j,i})<c_{11}\delta_{k},
\tag{40}%
\end{equation}
where $\mu(E)$ denotes the measure of the set $E$ and $\delta_{k}=O(\mid
\frac{1}{k}\mid+q_{k})^{1/r_{j}}$

$(b)$ The spectrum of $L(Q)$ contains the main part of $[0,\infty)$ in the
sense that \
\begin{equation}
\lim_{n\rightarrow\infty}\frac{\mu([0,n]\backslash\sigma(L(Q)))}{\mu
(\sigma(L(Q))\cap\lbrack0,n])}=0. \tag{41}%
\end{equation}

\end{theorem}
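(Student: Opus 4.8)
The plan is to prove part $(a)$ first and then derive part $(b)$ as an asymptotic density consequence.

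For part $(a)$, I would argue as follows. Fix a real eigenvalue $\mu_j$ of odd multiplicity $v$ and fix $k \geq N(j)$ and an index $i \in \{1,2,\dots,l\}$. The key point, already set up in the discussion preceding the theorem, is that for every $t \in [a_i, b_i] \subset (-1,1]\setminus U(j,k,\delta_k)$, condition (32) holds, so Theorem 5$(a)$ applies: the operator $L_t(Q)$ has exactly $v$ eigenvalues (counting multiplicity) inside the circle $|\lambda - \mu_{k,j}(t)| = \varepsilon_k$, and since $\mu_{k,j}(t)$ is real and Theorem 1 pairs eigenvalues with their conjugates, an odd count forces at least one of these $v$ eigenvalues to be real. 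Denote by $\lambda_{k,j}(t)$ one such real eigenvalue; it lies in $D(\mu_{k,j}(t),\varepsilon_k)$, hence $|\lambda_{k,j}(t)-\mu_{k,j}(t)| < \varepsilon_k \leq \tfrac12\delta_k$. Now I want to show the whole interval $\Gamma_{k,j,i} = [\mu_{k,j}(a_i)+\delta_k,\ \mu_{k,j}(b_i)-\delta_k]$ is covered. The map $t \mapsto \mu_{k,j}(t) = (2k+t)^2 + \mu_j$ is continuous and strictly increasing on $[a_i,b_i]$, with image $[\mu_{k,j}(a_i),\mu_{k,j}(b_i)]$. For any target value $\lambda^\ast \in \Gamma_{k,j,i}$, pick $t^\ast \in [a_i,b_i]$ with $\mu_{k,j}(t^\ast) = \lambda^\ast$; then the real eigenvalue $\lambda_{k,j}(t^\ast)$ satisfies $|\lambda_{k,j}(t^\ast) - \lambda^\ast| < \varepsilon_k$. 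This alone gives an $\varepsilon_k$-net, not full coverage, so the actual argument must use a connectedness/continuity sweep: as $t$ ranges over $[a_i,b_i]$, the $v$ eigenvalues inside the disk vary continuously (the family is analytic and the bounding circle stays in the resolvent set by Theorem 5$(a)$), and the real ones among them trace a closed subset of $\mathbb R$ whose "projection-type" image must, by an intermediate-value argument combined with the conjugation symmetry of Theorem 1, contain the full interval $[\mu_{k,j}(a_i)+\varepsilon_k, \mu_{k,j}(b_i)-\varepsilon_k] \supseteq \Gamma_{k,j,i}$. The pairwise disjointness of the $\Gamma_{k,j,i}$ follows because the $[a_i,b_i]$ are pairwise disjoint (see (34)) and, after subtracting the $\delta_k$-collars, the gaps between consecutive images $[\mu_{k,j}(b_i)-\delta_k, \mu_{k,j}(a_{i+1})+\delta_k]$ remain nonnegative once $k \geq N(j)$. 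Inequality (40) is then just (37) from the preceding discussion together with the fact that we remove $2\delta_k$ from each of the $l$ intervals and $l\delta_k$ is controlled by $c_{10}\delta_k$ absorbed into $c_{11}\delta_k$ after adjusting for how many collars are cut; here $\delta_k = O((|1/k| + q_k)^{1/r_j})$ by the definition of $\delta_k$ and Theorem 3.

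For part $(b)$, I would combine the intervals from $(a)$ over all admissible $k$. Let $\Gamma_{k,j} = (\mu_j + (2k-1)^2, \mu_j + (2k+1)^2]$, so that $\bigcup_{k \geq N(j)} \Gamma_{k,j}$ is the half line $(\mu_j + (2N(j)-1)^2, \infty)$, and $\mu(\Gamma_{k,j}) = 8k$. By part $(a)$, $\sigma(L(Q))$ contains $\bigcup_i \Gamma_{k,j,i} \subset \Gamma_{k,j}$ with $\mu(\Gamma_{k,j}) - \sum_i \mu(\Gamma_{k,j,i}) < c_{11}\delta_k$. Hence the portion of $[\mu_j + (2N(j)-1)^2, n]$ missed by $\sigma(L(Q))$ has measure at most $\sum_{k: \Gamma_{k,j} \cap [0,n] \neq \emptyset} c_{11}\delta_k$, which is $O(\sum_{k \leq \sqrt n/2} \delta_k)$. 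Since $\delta_k \to 0$, this sum is $o(\sqrt n) = o(n)$, while $\mu(\sigma(L(Q)) \cap [0,n]) \geq n - O(\sqrt n) - o(n)$ grows like $n$; adding the bounded missing piece $[0, \mu_j + (2N(j)-1)^2]$ changes nothing asymptotically. Dividing numerator by denominator gives the limit (41).

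The main obstacle is the coverage step in part $(a)$: passing from the "$\varepsilon_k$-net of real eigenvalues indexed by $t$" to "$\sigma(L(Q))$ actually contains the entire interval $\Gamma_{k,j,i}$." One must rule out the possibility that as $t$ varies, the real eigenvalue $\lambda_{k,j}(t)$ jumps — i.e. that a real eigenvalue collides with its conjugate and departs the real axis, leaving a gap. The resolution uses that the total count of eigenvalues inside the disk is the fixed odd number $v$ (Theorem 5$(a)$) together with the conjugation symmetry (Theorem 1): at any $t$ the real eigenvalues inside the disk, counted with multiplicity, number $v$ minus an even number, hence stay odd and in particular never drop to zero, and the continuous dependence of the full spectral projection on $t$ forces the set $\{\lambda_{k,j}(t) : t \in [a_i,b_i]\}$ to be a connected (hence interval-filling) subset of $\mathbb R$ modulo the $\varepsilon_k$-thin disk. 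Making this rigorous — rather than merely plausible from the $\varepsilon_k$-net — is the delicate part; the rest is bookkeeping with the already-established estimates (29)–(37).
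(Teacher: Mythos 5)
Your overall strategy for part $(a)$ matches the paper's: use Theorem 5$(a)$ to get exactly $v$ eigenvalues in the moving disk around $\mu_{k,j}(t)$ for every $t\in[a_i,b_i]$, invoke continuity of the unordered $v$-tuple in $t$, and combine conjugation symmetry with the oddness of $v$. You also correctly locate the crux: passing from an $\varepsilon_k$-net of real eigenvalues to actual coverage of all of $\Gamma_{k,j,i}$. But you leave that crux unresolved, and the mechanism you sketch for it does not work as stated. Knowing that at each fixed $t$ the number of real eigenvalues in the disk is odd (hence nonzero) only reproduces the $\varepsilon_k$-net you already dismissed, and the set of real eigenvalues swept out as $t$ varies need not be connected: the "real eigenvalue" can jump between branches when one conjugate pair leaves the real axis and another lands on it elsewhere. "Never drops to zero" does not imply "fills an interval," so as written part $(a)$ has a genuine gap.

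The paper closes this gap with a parity argument at a putative gap point, and that is the specific ingredient missing from your write-up. Suppose some $a\in\Gamma_{k,j,i}$ is not covered. By Theorem 5.2 of [2] choose $v$ single-valued continuous branches $\lambda_1(t),\dots,\lambda_v(t)$ on $[a_i,b_i]$. Each curve $\gamma_s$ runs from $\lambda_s(a_i)$, with $\operatorname{Re}\lambda_s(a_i)<\mu_{k,j}(a_i)+\varepsilon_k<a$, to $\lambda_s(b_i)$, with $\operatorname{Re}\lambda_s(b_i)>\mu_{k,j}(b_i)-\varepsilon_k>a$, while avoiding $a$; hence it passes above or below $a$. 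Conjugation symmetry (Theorems 1 and 5$(a)$) pairs each branch passing above with a distinct branch passing below, forcing $v$ to be even --- contradiction. (An airtight version of the same parity count: $P(t)=\prod_{s=1}^{v}\left(\lambda_s(t)-a\right)$ is continuous, real-valued because the multiset is conjugation-invariant and $a$ is real, never zero by assumption, yet $P(a_i)<0$ and $P(b_i)>0$ since the nonreal branches contribute positive factors in conjugate pairs while the odd number of real branches lies entirely to the left of $a$ at $t=a_i$ and entirely to the right at $t=b_i$.) Your handling of disjointness, of (40) via (37), and your derivation of (41) from part $(a)$ are all correct and in fact more explicit than the paper's one-line remark.
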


\begin{proof}
Let $\mu_{j}$ be a real eigenvalue of multiplicity $v$ of the matrix $A,$
where $v$ is an odd number. Then by Theorem 5 the disk $D\left(  \mu
_{k,j}(t),\delta_{k}\right)  $ for $t\in\lbrack a_{i},b_{i}]$ and $i=1,2,...l$
contains $v$ eigenvalues (counting the multiplicity) of $L_{t}(Q),$ where
$[a_{i},b_{i}]$ is defined in (34). Let us denote they by $\lambda
_{k,1}(t),\lambda_{k,2}(t),...,\lambda_{k,v}(t).$ Consider the unordered
$v$-tuple $\left\{  \lambda_{k,1}(t),\lambda_{k,2}(t),...,\lambda
_{k,v}(t)\right\}  .$ It follows from (4) that this unordered $v$-tuple depend
continuously (in sense of Theorem 5.2 of [2])\ on the parameter $t\in\lbrack
a_{i},b_{i}].$ Then by Theorem 5.2 of [2] there exist $p$ single-valued
continuous functions $\lambda_{1}(t),\lambda_{2}(t),...,\lambda_{v}(t)$ the
value of which constitute the $v$-tuple $\left\{  \lambda_{k,1}(t),\lambda
_{k,2}(t),...,\lambda_{k,v}(t)\right\}  $ for $t\in\lbrack a_{i},b_{i}].$

Now we prove that
\begin{equation}
\lbrack\mu_{k,j}(a_{i})+\delta_{k},\mu_{k,j}(b_{i})-\delta_{k}]\subset\left(
\cup_{s=1}^{v}\gamma_{s}\right)  \subset\sigma(L(Q)), \tag{42}%
\end{equation}
where $\gamma_{s}$ is the curve $\left\{  \lambda_{s}(t):t\in\lbrack
a_{i},b_{i}]\right\}  .$ Assume the converse. Then there exists
\[
a\in\lbrack\mu_{k,j}(a_{i})+\delta_{k},\mu_{k,j}(b_{i})-\delta_{k}%
]\backslash\left(  \cup_{s=1}^{v}\gamma_{s}\right)  .
\]
It means that the continuous curves $\gamma_{s}$ extend from $\lambda
_{s}(a_{i})\in U\left(  \mu_{k,j}(a_{i}),\delta_{k}\right)  $ $\ $to
$\lambda_{s}(b_{i})\in U\left(  \mu_{k,j}(b_{i}),\delta_{k}\right)  $ pas
above or below of the point $a,$ since $\operatorname{Re}\lambda_{s}%
(a_{i})\ <a<\operatorname{Re}\lambda_{s}(b_{i}).$ Moreover by Theorem 5$(a)$
if $\gamma_{s}$ passes above of $a$ then there exist $j\in\left\{
1,2,...,v\right\}  $ such that $j\neq s$ and $\gamma_{j}$ passes below
of$\ a.$ It implies that the number $v$\ of the curves $\gamma_{s}$ is an even
number. It contradicts to the assumption that $v$ is an odd number. Thus (42)
is proved and the subintervals $\Gamma_{k,j,i}$ defined in (39) are the
subsets of the spectrum of $L(Q)$. Estimation (40) follows from (37). Note
that $N(j)$ can be chosen so that $\mu_{k,j}\left(  b_{i})-\mu_{k,j}%
(a_{i}\right)  -2\delta_{k}>0$ for $k\geq N(j).$

The proof of (41) follows from $(a).$
\end{proof}

\begin{corollary}
If $m$ is an odd number, then the real component of the spectrum of $L(Q)$
contains the main part of $[0,\infty)$ and (41) holds.
\end{corollary}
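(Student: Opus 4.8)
The plan is to reduce the statement to Theorem 6 by a parity argument on the multiplicities of the eigenvalues of the constant matrix $A=\int_{(0,\pi)}Q(x)\,dx$. Recall that it was already established in the introduction, using the PT-symmetry condition (2) together with the substitution $x\mapsto-x$, that every entry $\int_0^\pi q_{i,j}(x)\,dx$ of $A$ is a real number, so $A$ is a real $m\times m$ matrix. Consequently its characteristic polynomial $p(\lambda)=\det(A-\lambda I)$ has real coefficients, which gives $\overline{p(\lambda)}=p(\overline{\lambda})$ for all $\lambda$; hence a nonreal complex number $\mu$ is a root of $p$ of multiplicity $k$ if and only if $\overline{\mu}$ is a root of the same multiplicity $k$. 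In other words, the nonreal eigenvalues of $A$ occur in conjugate pairs $\mu_{j},\overline{\mu_{j}}$ with equal algebraic multiplicities.

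Next I would carry out the counting. Denote by $\mu_{1},\dots,\mu_{s}$ the distinct real eigenvalues of $A$ with multiplicities $m_{1},\dots,m_{s}$, and by $\mu_{s+1},\dots,\mu_{p}$ the distinct nonreal ones with multiplicities $m_{s+1},\dots,m_{p}$. Since the total sum of algebraic multiplicities equals the size of the matrix, $m_{1}+m_{2}+\cdots+m_{p}=m$. By the previous paragraph the nonreal eigenvalues split into conjugate pairs with matching multiplicities, so $m_{s+1}+\cdots+m_{p}$ is an even integer (it is twice the sum of the multiplicities over a set of representatives of the conjugate pairs). Therefore $m_{1}+\cdots+m_{s}=m-(m_{s+1}+\cdots+m_{p})$ has the same parity as $m$, and since $m$ is assumed odd, $m_{1}+\cdots+m_{s}$ is odd. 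A finite sum of positive integers can be odd only if at least one summand is odd, so there exists an index $j\le s$ with $m_{j}$ odd; that is, $A$ has a real eigenvalue $\mu_{j}$ of odd multiplicity.

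Finally, with this $\mu_{j}$ in hand, Theorem 6 applies verbatim: part $(b)$ of that theorem states precisely that the spectrum of $L(Q)$ contains the main part of $[0,\infty)$ in the sense of (41), and part $(a)$ gives the explicit subintervals $\Gamma_{k,j,i}\subset\sigma(L(Q))\cap\mathbb{R}$ filling out the half line. This yields the corollary. I do not expect any genuine obstacle here; the only point requiring a little care is the parity bookkeeping, namely the verification that conjugate nonreal eigenvalues of a real matrix share the same algebraic multiplicity, which—as noted—follows immediately from the realness of the coefficients of $p(\lambda)$ and is not merely a statement about geometric multiplicity.
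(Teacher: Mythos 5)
Your proposal is correct and follows exactly the argument the paper itself gives (sketched in the introduction just before Corollary 1 is announced): since $A$ is a real matrix, its nonreal eigenvalues pair off with equal multiplicities, so for odd $m$ some real eigenvalue of $A$ has odd multiplicity and Theorem 6 applies. Your added detail on the characteristic polynomial merely makes explicit what the paper takes for granted.
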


Now we find a condition on the eigenvalues of the matrix $A$ for which the the
real component of the spectrum of $L(Q)$ contains a half line $[H,\infty)$ for
some $H.$

\begin{theorem}
If the matrix $A$ has at least three real eigenvalues $\mu_{j_{1}},$
$\mu_{j_{2}}$ and $\mu_{j_{3}}$ of odd multiplicities such that
\begin{equation}
\min_{i_{1},i_{2},i_{3}}\left(  diam(\{\mu_{j_{1}}+\mu_{i_{1}},\mu_{j_{2}}%
+\mu_{i_{2}},\mu_{j_{3}}+\mu_{i_{3}}\})\right)  =d\neq0, \tag{43}%
\end{equation}
where $i_{k}=1,2,...,s$ for $k=1,2,3$ and
\[
diam(E)=\sup_{x,y\in E}\mid x-y\mid,
\]
then there exists a number $H$ such that $[H,\infty)\in\sigma(L(Q))$.
\end{theorem}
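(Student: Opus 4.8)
The plan is to combine Theorem 6 with a covering argument. By Theorem 6(a), for each real eigenvalue $\mu_{j}$ of odd multiplicity and each $k\geq N(j)$, the real component of $\sigma(L(Q))$ contains the subintervals $\Gamma_{k,j,i}$ of $\Gamma_{k,j}=(\mu_{j}+(2k-1)^{2},\mu_{j}+(2k+1)^{2}]$, and the gaps in $\Gamma_{k,j}$ not covered by $\bigcup_{i}\Gamma_{k,j,i}$ are the images under $\mu_{k,j}$ of the small intervals $U(j,k,i,\cdot,\delta_{k})$ (inflated by $\delta_{k}$ on each side). The key point is that these gaps are centered — up to an $O(\delta_{k}/k)$ displacement — at the points $t$ where $\mu_{k,j}(t)$ equals $\mu_{-k,i}(t)$, $\mu_{-k-1,i}(t)$ or $\mu_{-k+1,i}(t)$ for $i\leq s$; equivalently, in the $\lambda$-variable, the gap of the $j$-th "band" near the value $\Lambda$ sits where $\Lambda-(2k+t)^{2}\approx\mu_{i}-\mu_{j}$ for some $i$, i.e. the location of a gap of the $j_{1}$-band depends on $j_{1}$ through the value $\mu_{j_{1}}+\mu_{i_{1}}$.

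First I would make precise the location of the gaps. Fix a large energy $\Lambda$; for each of the three eigenvalues $\mu_{j_{1}},\mu_{j_{2}},\mu_{j_{3}}$ there is a unique $k$ with $\Lambda\in\Gamma_{k,j_{r}}$ (for $\Lambda$ large these three values of $k$ are in fact equal, say $k=k(\Lambda)$, or differ by at most one, which I would absorb into constants). A point $\Lambda$ fails to lie in $\bigcup_{i}\Gamma_{k,j_{r},i}$ only if $\Lambda$ is within $O(\delta_{k})$ (in $\lambda$) of one of the excluded values, i.e. only if
\[
\bigl|\,\Lambda-(2k+t)^{2}-(\mu_{i}-\mu_{j_{r}})\,\bigr|<c\,\delta_{k}
\]
for some $i\leq s$ and some $t$ in the relevant small window, where $t=t(\Lambda)$ is determined by $\Lambda=(2k+t)^{2}+\mu_{j_{r}}$ up to the same accuracy. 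Reorganizing, the "bad set" for the $r$-th band near energy $\Lambda$ is contained in an $O(\delta_{k})$-neighborhood of the finite set $\{(2k+t)^{2}+\mu_{j_{r}}+\mu_{i}: i\leq s\}$ — equivalently, writing $E=(2k+t)^{2}$, near $\{E+\mu_{j_{r}}+\mu_{i}:i\le s\}$.

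The decisive step is then a pigeonhole/translate argument. Suppose, for contradiction, that some large $\Lambda$ lies in none of the three covered families $\bigcup_{i}\Gamma_{k,j_{r},i}$, $r=1,2,3$. Then for each $r$ there exist indices $i_{r}\leq s$ with $|\Lambda-(E_{r}+\mu_{j_{r}}+\mu_{i_{r}})|<c\,\delta_{k}$, where $E_{r}=(2k_{r}+t_{r})^{2}$ and $k_{r},t_{r}$ are pinned down by $\Lambda$. Because the three base points $E_{r}+\mu_{j_{r}}$ are all within $O(1)$ of $\Lambda$ while $\delta_{k}\to0$, one gets that the three numbers $\mu_{j_{r}}+\mu_{i_{r}}$, $r=1,2,3$, all coincide up to $O(\delta_{k})+O(1/k)\to0$; hence $\operatorname{diam}(\{\mu_{j_{1}}+\mu_{i_{1}},\mu_{j_{2}}+\mu_{i_{2}},\mu_{j_{3}}+\mu_{i_{3}}\})<d$ for $k$ large, contradicting (43). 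Therefore every sufficiently large $\Lambda$ lies in at least one $\Gamma_{k,j_{r},i}\subset\sigma(L(Q))$, which gives $[H,\infty)\subset\sigma(L(Q))$ for $H$ large enough (in particular $H>\mu_{j_{r}}+(2N(j_{r})-1)^{2}$ for $r=1,2,3$, and $H$ larger than the bounded exceptional ranges coming from $|k|\leq c_{7}$-type terms).

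The main obstacle I expect is the bookkeeping in the reduction: one must carefully match the $t$-intervals $U(j,k,i,-k,\delta_{k})$, $U(j,k,i,-k\mp1,\delta_{k})$ of Lemma 2(b) with the corresponding $\lambda$-intervals via the monotone map $\mu_{k,j}$, track that the three bands are indexed by a common (or near-common) $k$, and control the endpoint/inflation terms $\pm\delta_{k}$ so that the displacement of each gap's center is genuinely $o(1)$ and not merely bounded. Once the gaps are shown to be $o(1)$-neighborhoods of the points $E+\mu_{j_{r}}+\mu_{i}$ with a single shared $E$, the diameter hypothesis (43) closes the argument immediately; everything else is routine estimation using $\delta_{k}\to0$ and (35)–(37).
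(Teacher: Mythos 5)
Your proposal is correct and follows essentially the same route as the paper: Theorem 6$(a)$ plus the observation that the uncovered gaps of the $j_{r}$-th family near a large energy are $o(1)$-neighborhoods of points determined (up to a common base point $(2k+n)^{2}$, $n\in\{-1,0,1\}$) by the sums $\mu_{j_{r}}+\mu_{i}$ --- these are exactly the paper's sets $C(k,j_{p},i,\beta_{k},n)$ centered at $(2k+n)^{2}+\tfrac{\mu_{i}+\mu_{j_{p}}}{2}$ --- so that a point omitted by all three families would force the three sums $\mu_{j_{r}}+\mu_{i_{r}}$ to lie within $o(1)$ of one another, contradicting (43). Your factor-of-one-versus-one-half in the gap centers and the absorption of the $k$-index mismatch into $n\in\{-1,0,1\}$ are exactly the bookkeeping the paper carries out in (46)--(50).
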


\begin{proof}
By Theorem 6$(a)$ we have
\begin{equation}
\sigma(L(Q))\supset\left(  S(j_{1},N(j_{1}))\cup S(j_{2},N(j_{2}))\cup
S(j_{3},N(j_{3}))\right)  , \tag{44}%
\end{equation}
where
\begin{equation}
S(j,N)=%
{\textstyle\bigcup\limits_{i=1,2,...l;\text{ }k\geq N}}
[\mu_{k,j}(a_{i})+\delta_{k},\mu_{k,j}(b_{i})-\delta_{k}] \tag{45}%
\end{equation}
Using the definition of $U(j,k,\delta_{k})$\ one can easily verify that there
exists $\alpha_{k}$ such that $\alpha_{k}\rightarrow0$ as $k\rightarrow\infty$
and
\begin{equation}
\mu_{k,j}(U(j,k,\delta_{k}))\subset%
{\textstyle\bigcup\limits_{n=-1,0,1;\text{ }i=1,2,...,s}}
C(k,j,i,\alpha_{k},n), \tag{46}%
\end{equation}
where
\begin{equation}
C(k,j,i,a,n)=\{x\in\mathbb{R}:\mid x-((2k+n))^{2}-\frac{\mu_{i}+\mu_{j}}%
{2}\mid<a\}. \tag{47}%
\end{equation}
Then we have
\[%
{\textstyle\bigcup\limits_{k\geq N(j_{p})}}
\{\mu_{k,j_{p}}(t):t\in(-1,1]\backslash U(j_{p},k,\alpha_{k})\}\supset
(h,\infty)\backslash%
{\textstyle\bigcup\limits_{\substack{n=-1,0,1;\\k\geq N\\i=1,2,...,s}}}
C(k,j_{p},i,\alpha_{k},n),
\]
where $h=(2N(j_{p})-1)^{2}+\mu_{j_{p}}$ and $p=1,2,3.$ Therefore using
(44)-(47) we obtain that there exist $H>h$ and $\beta_{k}$ such that
$\beta_{k}\rightarrow0$ as $k\rightarrow\infty$ and
\begin{equation}
\sigma(L(q))\supset(H,\infty)\backslash%
{\textstyle\bigcup\limits_{n=-1,0,1;\text{ }k\geq N;\text{ }i=1,2,...,s}}
C(k,j_{p},i,\beta_{k},n) \tag{48}%
\end{equation}
{} for all $p=1,2,3,$ where $N\geq\max\left\{  N(j_{1}),N(j_{2}),N(j_{3}%
)\right\}  .$ Now we prove that
\begin{equation}
\lbrack H,\infty)\subset\sigma(L(q)). \tag{49}%
\end{equation}
By (48) to prove (49) it is enough to show that the set
\begin{equation}%
{\textstyle\bigcap\limits_{p=1,2,3}}
\left(
{\textstyle\bigcup\limits_{n=-1,0,1;\text{ }k\geq N;\text{ }i=1,2,...,s}}
C(k,j_{p},i,\beta_{k},n)\right)  \tag{50}%
\end{equation}
is empty. Since $\beta_{k}\rightarrow0$ the number $N$ can be chosen so that
$4\beta_{k}<d$ for $k\geq N$. If the set (50) contains an element $x,$ then%
\[
x\in%
{\textstyle\bigcup\limits_{n=-1,0,1;\text{ }k\geq N;\text{ }i=1,2,...,s}}
C(k,j_{p},i,\beta_{k},n)
\]
for all $p=1,2,3.$ Using this and the definition of $C(k,j_{p},i,a,n)$ (see
(47)), we obtain that there exist $k\geq N;$ $n\in\left\{  -1,0,1\right\}  $
and $i_{p}\in\left\{  1,2,...,s\right\}  $ such that
\[
\mid x-(\pi(2k+n))^{2}-\frac{\mu_{j_{p}}+\mu_{i_{p}}}{2}\mid<\beta_{k}%
\]
for all $p=1,2,3$. This implies that
\[
\left\vert \left(  \mu_{j_{q}}+\mu_{i_{q}}\right)  -\left(  \mu_{j_{p}}%
+\mu_{i_{p}}\right)  \right\vert <4\beta_{k}<d
\]
for all $p,q=1,2,3$, where $p\neq q.$ It contradicts the condition (43).
Theorem is proved.
\end{proof}

Note that in paper [10] we proved Theorem 7 for the self-adjoint operator
$L(Q)$ under additional condition that $\mu_{j_{1}},$ $\mu_{j_{2}}$ and
$\mu_{j_{3}}$ are the simple eigenvalues. In this paper we prove it for the
non-self-adjoint operator $L(Q)$ and without the simplicity condition.

\section{Data Availability}

Data sharing is not applicable to this article as no new data were created or
analyzed in this study.

\end{document}